\newtheorem{theo}{Theorem} 
\newtheorem{lem}[theo]{Lemma}
\newtheorem{prop}[theo]{Proposition}
\newtheorem{rem}[theo]{Remark}
\def\R{\mathbb R}
\def\D{\mathbb D}
\def\C{\mathbb C}
\def\bC{\mathbf C}
\def\bM{\mathbf M}
\def\Re{\text{Re}}
\def\Im{\text{Im}}
\def\diam{\text{diam}}
\def\bv{\mathbf{v}}
\def\div{\operatorname{div}}
\def\supp{\operatorname{supp}}
\def\curl{\operatorname{curl}}
\def\v{\operatorname{\textbf{v}}}
\title{\sc{Nonlinear transport equations \\ and quasiconformal maps}}
\author{\sc{Albert Clop, Banhirup Sengupta}}
\date{}
\begin{document}

\maketitle

\abstract{We prove existence of solutions to a nonlinear transport equation in the plane, for which the velocity field is obtained as the convolution of the classical Cauchy Kernel with the unknown. Even though the initial datum is bounded and compactly supported, the velocity field may have unbounded divergence. The proof is based on the compactness property of quasiconformal mappings.}

\section{Introduction}

\noindent
In this article we prove existence of global in time solutions to the following active scalar equation,
\begin{equation}\label{cauchysystem}
\begin{cases}
\frac{d}{dt}\omega+\v\cdot \nabla \omega = 0, \\
\v(t,\cdot)=K\ast \omega(t,\cdot),\\
\omega(0,\cdot)=\omega_0.
\end{cases}\end{equation}
In the above system, one has 
$$\aligned
K(z)=\frac{e^{i\theta}}{2\pi z}=\frac1{2\pi}\,\frac{(x\,\cos\theta+y\,\sin\theta, x\,\sin\theta-y\,\cos\theta) }{x^2+y^2},
\endaligned$$
and $\theta\in [0, 2\pi]$ is fixed, while $\omega_0\in L^\infty$ is a given compactly supported and real valued function. This model arises as a natural counterpart to the classical \emph{planar Euler system of equations in vorticity form}, which is given also by \eqref{cauchysystem} but with a different choice for the kernel $K$, namely
$$\aligned
K(z)=\frac{i}{2\pi\,\bar{z}}=\frac1{2\pi}\,\frac{(-y,x)}{ x^2+y^2}.
\endaligned$$
In both cases, the quantity $\partial_{t}+\v\cdot \nabla$ is called the \emph{material derivative} of the unknown $\omega:[0,\infty)\times \C\to\R$, and $\v$ is called the \emph{velocity}. In Euler system, $\v$ represents the velocity field of a perfect, incompressible, inviscid fluid, and $\omega$ is known as the \emph{vorticity of the fluid}. \\
\\
In Euler's setting, the Biot-Savart law $\v = K\ast \omega$ can be written in terms of complex derivatives as
\begin{equation}\label{dv}
\partial \v = \frac{i\omega}2.
\end{equation}
where $\partial=\frac{\partial_x-i\partial_y}{2}$ denotes the classical complex derivative. Since $\omega$ is real valued, this ensures that $\v$ has divergence $\div(\v)=0$, and its curl is $\curl(\v)=\omega$. On the other hand, and under enough regularity, the transport structure of the equation guarantees for the solution the following representation formula,
\begin{equation}\label{representation}
\omega(t,X(t,x))=\omega_0(x),
\end{equation}
where $X(t,x)$ is the flow of $\bv$, that is, the solution to the ODE
$$\begin{cases}\frac{d}{dt}X(t,x)=\bv(t,X(t,x))\\X(0,x)=x\end{cases}$$
The incompressibility condition, i.e. $\div(\v)=0$, guarantees $X(t,\cdot)$ to be a measure preserving self map of $\R^2$, and so the $L^1$ norm of $\omega(t,\cdot)$ is constant in time. On the other hand, if $\omega_0$ is essentially bounded, then $\omega$ is essentially bounded as well, and so the $L^\infty$ norm of $\omega$ is also constant in time. So both the incompressibility of the fluid and the boundedness of $\curl(\v)$ are essential to get for $\omega$ uniform $L^1$ and $L^\infty$ bounds. These bounds are basic in the proof of Yudovich's Theorem \cite{Y}, which establishes existence and uniqueness of global in time solutions to the Euler system under the assumption $\omega_0\in L^\infty$. \\
\\
In contrast to \eqref{dv}, in our new setting \eqref{cauchysystem} the kernel ensures now that 
\begin{equation}\label{dbarv}
\overline\partial \v =\frac{e^{i\theta}\,\omega}{2}
\end{equation}
where $\overline\partial=\frac{\partial_x+i\partial_y}{2}$ denotes the anticonformal complex derivative. Especially, $\div(\v)$ needs not be identically $0$, so the vector field $\v$ is not anymore incompressible. Moreover, classical Calder\'on-Zygmund Theory can be used to show that now, even for bounded and compactly supported $\omega_0$, both $\div(\v)$ and $\curl(\v)$ may be unbounded functions. Still the transport structure of the equation is unaffected by the change on the kernel, and nice solutions $\omega(t,\cdot)$ admit again the representation formula \eqref{representation}, although now the flow $X(t,\cdot)$ needs not be measure preserving. As a consequence, the control in time of both $L^1$ and $L^\infty$ norms of $\omega(t,\cdot)$ is not so automatic, and might even fail.  \\
\\
For certain linear transport models \cite{CJMO, CJMO2, CJMO3}, it has been recently shown  that their well-posedness do not depend on the measure-preservation property of the flow and, instead, the preservation of Lebesgue null sets is the only requirement. Such models already show that Lebesgue null sets may be preserved by the flow if the velocity field has non-zero or even unbounded divergence.\\
\\
In the same way $\|\partial\v\|_{L^\infty}$ keeps bounded in time for any Yudovich solution to the Euler system, in our setting \eqref{cauchysystem} the quantity $\|\overline\partial\v\|_{L^\infty}$ keeps bounded in time as long as one is able to show the preservation of Lebesgue null sets, rather than the preservation of Lebesgue measure through the flow. Having uniform bounds for $\|\overline\partial\v\|_{L^\infty}$ immediately drives our attention to H.M. Reimann's paper \cite{R}. There it was shown that such vector fields produce flows $X(t,x)$ with the very special property of being quasiconformal for every $t>0$. Quasiconformal maps are known to Geometric Function Theory experts to be a very well understood class of homeomorphisms, and their compactness properties make them specially suitable for solving certain elliptic PDE problems. This time, though, we will use them for a purely hyperbolic PDE. Our main result is as follows.
\begin{theo}\label{main}
If the initial datum $\omega_{0}\in L^\infty$ has compact support, there exists a solution $\omega\in L^{1}([0,T], L^\infty)$ of \eqref{cauchysystem}
for every $T>0$.
 \end{theo}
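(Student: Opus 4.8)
The plan is to obtain the solution as a limit of regularized problems, with the compactness supplied by the theory of quasiconformal mappings, and then to verify that the limit solves \eqref{cauchysystem} in the weak sense.

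\textbf{Step 1 (regularization).} Let $(\phi_\varepsilon)_{0<\varepsilon<1}$ be a standard mollifier and put $\omega_0^\varepsilon=\phi_\varepsilon*\omega_0\in C_c^\infty$, so that $\|\omega_0^\varepsilon\|_{L^\infty}\le\|\omega_0\|_{L^\infty}$, $\supp\omega_0^\varepsilon\subset B_{R_0+1}$ (with $\supp\omega_0\subset B_{R_0}$), and $\omega_0^\varepsilon\to\omega_0$ in $L^p$ for every $p<\infty$. Replace the Biot--Savart law by $\v^\varepsilon=\phi_\varepsilon*(K*\omega^\varepsilon)$. Since $K\in L^q_{\mathrm{loc}}$ for $q<2$, this $\v^\varepsilon$ is smooth and, on any interval where $\omega^\varepsilon$ stays bounded and compactly supported, space-Lipschitz; hence the regularized system has a unique global smooth solution $\omega^\varepsilon$, given by the Lagrangian identity $\omega^\varepsilon(t,X_\varepsilon(t,x))=\omega_0^\varepsilon(x)$, where $X_\varepsilon$ is the smooth flow of $\v^\varepsilon$.

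\textbf{Step 2 (uniform bounds and uniform quasiconformality).} The transport structure gives $\|\omega^\varepsilon(t,\cdot)\|_{L^\infty}\le\|\omega_0\|_{L^\infty}$ for all $t$. Splitting the kernel near and far from the origin yields $\|K*g\|_{L^\infty}\lesssim\|g\|_{L^\infty}^{1/2}\|g\|_{L^1}^{1/2}$; combining this with $\|\omega^\varepsilon(t,\cdot)\|_{L^1}\le\|\omega_0\|_{L^\infty}\,|\supp\omega^\varepsilon(t,\cdot)|$ and the fact that $\supp\omega^\varepsilon(t,\cdot)$ spreads at speed $\le\|\v^\varepsilon(t,\cdot)\|_{L^\infty}$, a Gr\"onwall argument gives, on $[0,T]$, a radius $\rho(T)$ and a constant $C(T)$, both independent of $\varepsilon$, with $\supp\omega^\varepsilon(t,\cdot)\subset B_{\rho(T)}$ and $\|\v^\varepsilon(t,\cdot)\|_{L^\infty}\le C(T)$. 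The crucial point is that, since $\overline\partial$ commutes with $\phi_\varepsilon*$ and by \eqref{dbarv},
\[
\|\overline\partial\v^\varepsilon(t,\cdot)\|_{L^\infty}=\bigl\|\phi_\varepsilon*\tfrac{e^{i\theta}}{2}\,\omega^\varepsilon(t,\cdot)\bigr\|_{L^\infty}\le\tfrac12\|\omega_0\|_{L^\infty}\qquad\text{for all }t,\ \varepsilon.
\]
By Reimann's theorem \cite{R} --- concretely, by integrating along the flow the Riccati-type ODE $\tfrac{d}{dt}\mu=(\overline\partial\v^\varepsilon\circ X_\varepsilon)\,(\overline{\partial X_\varepsilon}/\partial X_\varepsilon)\,(1-|\mu|^2)$ satisfied by the complex dilatation $\mu=\overline\partial X_\varepsilon/\partial X_\varepsilon$ of $X_\varepsilon(t,\cdot)$ --- the maps $X_\varepsilon(t,\cdot)$ are $K(t)$-quasiconformal with $K(t)\le e^{t\|\omega_0\|_{L^\infty}}$, uniformly in $\varepsilon$.

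\textbf{Step 3 (compactness and passage to the limit).} The function $K*\omega^\varepsilon$ is bounded in $W^{1,p}_{\mathrm{loc}}$ uniformly in $\varepsilon$ (its gradient is a Calder\'on--Zygmund transform of the uniformly bounded, uniformly supported $\omega^\varepsilon$), and its time derivative is controlled via the equation; so, along a subsequence, $\omega^\varepsilon\rightharpoonup\omega$ weak-$*$ in $L^\infty$ and $\v^\varepsilon\to\v:=K*\omega$ locally uniformly in $(t,x)$. Since $K*\omega^\varepsilon$ is, for bounded compactly supported data, log-Lipschitz in space uniformly on $[0,T]$, the field $\v$ is log-Lipschitz and its flow is unique. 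Meanwhile, by the compactness (normal-family) property of $K(t)$-quasiconformal maps together with the equicontinuity furnished by $|X_\varepsilon(t,x)-x|\le C(T)\,t$, along a further subsequence $X_\varepsilon(t,\cdot)\to X(t,\cdot)$ and $X_\varepsilon(t,\cdot)^{-1}\to X(t,\cdot)^{-1}$ locally uniformly, with $X(t,\cdot)$ a $K(t)$-quasiconformal homeomorphism; passing to the limit in $X_\varepsilon(t,x)=x+\int_0^t\v^\varepsilon(s,X_\varepsilon(s,x))\,ds$ identifies $X$ as the (unique) flow of $\v$. Set $\omega(t,\cdot)=\omega_0\circ X(t,\cdot)^{-1}$; as $X(t,\cdot)^{-1}$ is quasiconformal it preserves Lebesgue null sets, so $\omega(t,\cdot)\in L^\infty$ with $\|\omega(t,\cdot)\|_{L^\infty}\le\|\omega_0\|_{L^\infty}$, and therefore $\omega\in L^\infty([0,T]\times\C)\subset L^1([0,T],L^\infty)$. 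Writing $\omega^\varepsilon-\omega=(\omega_0^\varepsilon-\omega_0)\circ X_\varepsilon^{-1}+(\omega_0\circ X_\varepsilon^{-1}-\omega_0\circ X^{-1})$, changing variables by means of the uniform higher integrability of the Jacobians of the $K(t)$-quasiconformal maps $X_\varepsilon(t,\cdot)$ (Astala), and using $\omega_0^\varepsilon\to\omega_0$ in every $L^p$ together with the density of continuous functions, one upgrades this to $\omega^\varepsilon\to\omega$ strongly in $L^p_{\mathrm{loc}}([0,T]\times\C)$ for every $p<\infty$. This strong convergence lets one pass to the limit in the weak formulation of \eqref{cauchysystem}: the transport term is handled by $\v^\varepsilon\to\v$ locally uniformly, and the only term involving the (possibly unbounded) $\div\v^\varepsilon$, which is a mollified Beurling-type singular transform of $\omega^\varepsilon$, converges because that transform is bounded on $L^p$ and $\omega^\varepsilon\to\omega$ strongly there. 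Hence $\omega$ solves \eqref{cauchysystem}.

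\textbf{Where the difficulty lies.} The serious obstacle is Step 3. Quasiconformal compactness yields uniform convergence of the homeomorphisms $X_\varepsilon$ but only weak $L^1$ convergence of their Jacobians, whereas $\omega_0$ is merely bounded; passing to the limit in the nonlinear quantities built from $\omega_0^\varepsilon\circ X_\varepsilon^{-1}$ --- in particular turning weak-$*$ into strong $L^p$ convergence of $\omega^\varepsilon$, which is exactly what makes the transport and divergence terms converge in spite of $\div\v^\varepsilon$ being unbounded --- forces one to exploit the self-improving higher integrability of Jacobians of $K$-quasiconformal maps, uniformly in $\varepsilon$, and the identification of the weak-$*$ limit with the Lagrangian object $\omega_0\circ X^{-1}$.
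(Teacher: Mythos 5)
Your proposal is correct, and its overall architecture coincides with the paper's: mollify the datum, solve the regularized problem smoothly, use $\|\overline\partial\v^\epsilon(t,\cdot)\|_\infty\le\frac12\|\omega_0\|_\infty$ together with Reimann's theorem to get flows that are $\mathcal{K}_t$-quasiconformal uniformly in $\epsilon$, extract a limiting quasiconformal flow by the normal-family property, set $\omega(t,\cdot)=\omega_0\circ X(t,\cdot)^{-1}$, and upgrade weak to strong convergence of $\omega^\epsilon$ via the higher integrability of quasiconformal Jacobians so as to pass to the limit in the weak formulation (including the $\div\v^\epsilon$ term). The one place where you genuinely diverge is the step the paper itself calls the most delicate: the uniform-in-$\epsilon$ control of $\|\omega^\epsilon(t,\cdot)\|_1$, and hence of $\|\v^\epsilon(t,\cdot)\|_\infty$. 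The paper obtains it by showing that $X^\epsilon(t,\cdot)$, suitably normalized, is a \emph{principal} quasiconformal map which is conformal off $\supp\omega_0^\epsilon$ (Lemmas \ref{support} and \ref{principal}), so that the area distortion theorem gives $|X^\epsilon(t,\supp\omega_0^\epsilon)|\le\mathcal{K}_t\,|\supp\omega_0^\epsilon|$. You instead close a Gr\"onwall loop between the radius $\rho(t)$ of a ball containing $\supp\omega^\epsilon(t,\cdot)$, which grows at speed at most $\|\v^\epsilon(t,\cdot)\|_\infty$, and the interpolation bound $\|\v^\epsilon\|_\infty\le C\|\omega^\epsilon\|_1^{1/2}\|\omega^\epsilon\|_\infty^{1/2}\le C\|\omega_0\|_\infty\,\rho(t)$ of Lemma \ref{LinftyK}. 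This is elementary and valid: it buys simplicity (no conformality off the support, no area distortion theorem) at the price of a cruder bound that depends on the diameter of $\supp\omega_0$ rather than on its measure; it also forgoes the structural information that the paper reuses when bounding $\|J^\epsilon(t,\cdot)\|_{L^p(D)}$ in the convergence proof, although your uniform ball bound on $X^\epsilon(t,D)$ serves equally well there. The remaining deviations --- mollifying the constitutive law $\v^\epsilon=\phi_\epsilon\ast K\ast\omega^\epsilon$ instead of invoking the smooth theory of \cite{C}, and the slightly informal treatment of equicontinuity in time of $\v^\epsilon$ --- are harmless.
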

 \noindent

The paper is structured as follows.  In Section \ref{measurabletheory} we prove existence of solutions of \eqref{cauchysystem}.  In Section \ref{governing} we show that \eqref{cauchysystem} admits an equivalent formulation in terms of $\v$ and a scalar valued function $q$, similar to the velocity formulation of Euler's system, and explain why this formulation fails to provide uniqueness of solutions.\\
\\
{\bf{Acknowledgements}}. The authors were partially supported by projects $2017SGR395$ (Govt. of Catalonia) and $ PID2020-112881GB-I00$ (Govt. of Spain).

\section{Existence  theory for $\omega_0\in L^\infty$}\label{measurabletheory}

\noindent
Given compactly supported $\omega_0\in L^\infty(\C)$, we look for scalar-valued functions $\omega:\R\times\C\to\R$ belonging to $L^1(\R, L^\infty(\C))$ that solve the problem \eqref{cauchysystem}. Our goal is to prove that a weak solution to \eqref{cauchysystem} exists and can be represented by
$$\omega(t,X(t,z))=\omega_0(z)$$
where $X$ are the trajectories of the vector field $\v$. To this end, we start by mollifying the datum $\omega_0$ to $\omega_0^\epsilon\in C^\infty$ in such a way that
$$\aligned
\|\omega_0^\epsilon\|_\infty&\leq \|\omega_0\|_\infty,\\
\|\omega_0^\epsilon\|_1&\leq \|\omega_0\|_1, \text{ and}\\
\omega_0^\epsilon\text{  has comp}&\text{act support},
\endaligned$$
and moreover $\|\omega_0^\epsilon-\omega_0\|_1\to 0$, as $\epsilon\to 0$. Then, by virtue of the smooth theory (see for instance \cite[Theorem 2]{C}), to each $\omega_0^\epsilon$ we can associate its unique solution $\omega^\epsilon$ to 
\begin{equation}\label{nlteps}\begin{cases}
\frac{d}{dt}\,\omega^\epsilon +\v^\epsilon\cdot\nabla\omega^\epsilon=0,\\
\v^\epsilon(t,\cdot)=K\ast\omega^\epsilon(t,\cdot),\\\omega^\epsilon(0,\cdot)=\omega^\epsilon_0\end{cases}
\end{equation}
with $K(z)=\frac{e^{i\theta}}{2\pi z}$. For each $t\in\R$, $\omega^\epsilon(t,\cdot)$ is continuous and compactly supported, locally uniformly in time. As a consequence, the velocity field $\v^\epsilon=K\ast\omega^\epsilon$ is at least $C^1$ in the space variable. This guarantees existence and uniqueness of a well defined flow $X^\epsilon$ of diffeomorphisms $X^\epsilon(t,\cdot):\C\to\C$ such that
$$\begin{cases}\frac{d}{dt}X^\epsilon (t,z)=\v^\epsilon(t,X^\epsilon(t,z)),\\X^\epsilon(0,z)=z.\end{cases}$$
Moreover, the solution $\omega^\epsilon$ is obtained by translating the datum $\omega_0^\epsilon$ along the trajectories $X^\epsilon(t,x)$ of $\v^\epsilon$, that is, $$\omega^\epsilon(t,X^\epsilon(t,z))=\omega_0^\epsilon(z).$$
Having $\div(\v^\epsilon)\in L^\infty$, it is clear that $X^\epsilon(t,\cdot)$ preserves Lebesgue null sets. Thus, and since $\omega_0^\epsilon\in L^\infty(\C)$, we have $\|\omega^\epsilon(t,\cdot)\|_\infty=\|\omega_0^\epsilon\|_\infty$.\\
\\
The proof of Theorem \ref{main} consists of proving convergence of the solutions $\omega^\epsilon$ and $\v^\epsilon$ in an appropriate sense. As usually, the most delicate point is the following uniform $L^1$ bound,
$$\|\omega^\epsilon(t,\cdot)\|_1\leq e^{2t\,\|\omega_0\|_\infty}\,\|\omega_0\|_\infty\,|\supp\omega_0|.$$
This, combined with the preservation in time of $\|\omega^\epsilon(t,\cdot)\|_\infty$ and Lemma \ref{LinftyK} below, gives uniform bounds for $\v^\epsilon$, which are essential to find limit trajectories. \\
\\
In the classical Euler's setting, that is for the kernel $K(z)=\frac{i}{2\pi\bar{z}}$, the (uniform in $\epsilon$) $L^1$ control of $\omega^\epsilon(t,\cdot)$ is automatic from the measure-preserving property of the flow, namely $\det DX^\epsilon(t,\cdot))=1$. In turn, this comes from the fact that $\div(\v^\epsilon)(t,\cdot)=0$ at every time $t$. Now, for $K(z)=\frac{e^{i\theta}}{2\pi z}$, one certainly has $\div(\v^\epsilon)(t,\cdot)\in L^\infty$ at any time, due to the smoothness of $\v^\epsilon$, but as $\epsilon\to 0$ one might see $\|\div(\v^\epsilon)(t,\cdot)\|_\infty$ blowing up, and with it any uniform bound on $\det DX^\epsilon(t,\cdot)$ would also blow up. It is very remarkable that under these circumstances still the uniform $L^1$ control of $\omega^\epsilon$ is possible, and comes as a consequence of the fact that the flow consists of principal quasiconformal maps which are conformal outside of the support of $\omega_0$, and moreover with uniformly bounded distortion.  To show this, step by step, we first need to recall the following result, due to H.M. Reimann \cite{R}. We only state it on the plane, although it holds also in higher dimensions.

\begin{theo}\label{Reimannth}
Let $\bv:[0,T]\times\C\to\C$ be a continuous vector field, such that for each $t$ one has
$$\limsup_{|z|\to+\infty}\frac{|\bv(t,z)|}{|z|\,\log|z|}<+\infty.$$
Suppose that the distributional derivatives $\partial \bv(t,\cdot)$ and $\overline\partial \bv(t,\cdot)$ are locally integrable functions of $z\in\C$, and moreover suppose that 
$$\sup_{t\in[0,T]}\|\overline\partial\bv (t,\cdot)\|_\infty \leq C_0<\infty.$$
Then, $\bv$ admits a unique flow $X(t,z)$ of $K_t$-quasiconformal maps $X(t,\cdot):\C\to\C$, and
$$K_t\leq \exp \left(2\int_0^t \|\overline\partial \bv(s,\cdot)\|_\infty \,ds\right).$$
\end{theo}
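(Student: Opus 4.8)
The plan is to prove Reimann's theorem by first observing that the hypothesis $\|\overline\partial\bv(t,\cdot)\|_\infty \le C_0$ is exactly the statement that, at each frozen time $t$, the vector field $\bv(t,\cdot)$ solves a Beltrami-type equation with a bounded dilatation, and then to propagate this structure along the flow. More precisely, I would begin by constructing the flow: the growth condition $\limsup_{|z|\to\infty} |\bv(t,z)|/(|z|\log|z|) < \infty$ together with the (local) Sobolev regularity of $\bv(t,\cdot)$ gives, via a standard Osgood-type continuation argument, a unique continuous flow $X(t,z)$ defined for all $t\in[0,T]$ and all $z\in\C$, with each $X(t,\cdot)$ a homeomorphism of $\C$ (one-to-one-ness follows from the log-Lipschitz-in-space bound that the growth and integrability hypotheses provide on compact sets). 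The substantive part is then to show that $X(t,\cdot)$ is quasiconformal with the claimed dilatation bound.

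The key computation is to differentiate the Beltrami coefficient of $X(t,\cdot)$ along the flow. Writing $f = X(t,\cdot)$, set $\mu(t,z) = \overline\partial f/\partial f$ and track the evolution of the complex dilatation. Differentiating the flow equation $\partial_t X = \bv(t,X)$ in $z$ and using the chain rule $\partial_t(\partial X) = (\partial \bv)(t,X)\,\partial X + (\overline\partial\bv)(t,X)\,\overline{\partial X}$ and similarly for $\overline\partial X$, one derives a Riccati-type ODE along trajectories for $\mu$, of the schematic form $\frac{D}{Dt}\mu = \overline\partial\bv(t,X) - (\overline{\overline\partial\bv(t,X)})\,\mu^2 + (\text{terms linear in }\mu)$ — in particular, at $t=0$ one has $X(0,\cdot)=\mathrm{id}$ so $\partial X = 1$, $\overline\partial X = 0$, and $\mu(0,\cdot)=0$. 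The crucial analytic fact is that the growth of $|\mu|$ is controlled purely by $\|\overline\partial\bv(t,\cdot)\|_\infty$: a Gr\"onwall estimate on $\log\frac{1+|\mu|}{1-|\mu|}$ (equivalently on the pointwise dilatation $K(t,z) = \frac{1+|\mu|}{1-|\mu|}$) yields
$$\log K(t,z) \le 2\int_0^t \|\overline\partial\bv(s,\cdot)\|_\infty\,ds,$$
which is exactly the claimed bound after exponentiating. To make the formal computation rigorous under only Sobolev regularity of $\bv$, I would first establish everything for smooth $\bv$ and then pass to the limit using the compactness of quasiconformal maps with uniformly bounded dilatation (normal-family / locally uniform convergence), checking that the limit flow still solves the ODE and inherits the dilatation bound; uniqueness of the flow is what guarantees the approximating flows converge to the right object.

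The main obstacle, as usual in this circle of ideas, is the rigorous justification of the Beltrami-coefficient ODE when $\bv(t,\cdot)$ is merely $W^{1,1}_{\mathrm{loc}}$ rather than $C^1$: the chain rule for $DX$ along the flow, and the very differentiability of $X(t,\cdot)$ in $z$, are delicate at this level of regularity. The clean route around this is precisely the mollification-plus-compactness scheme indicated above — regularize $\bv$ in space (keeping the uniform bound on $\|\overline\partial\bv\|_\infty$, which is stable under convolution), run the smooth theory, and then extract a quasiconformal limit. A secondary technical point is controlling the behavior at infinity so that each $X(t,\cdot)$ is a genuine self-homeomorphism of $\C$ (not just of a large disc); this is where the $|z|\log|z|$ growth hypothesis is used, guaranteeing trajectories do not escape to infinity in finite time and that the normalization needed for the quasiconformal normal-family argument is available. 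I do not expect the Gr\"onwall estimate itself to be hard once the differentiation is licensed; the entire difficulty is in licensing it.
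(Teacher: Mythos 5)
This theorem is not proved in the paper at all: it is quoted as an external result of Reimann \cite{R}, so there is no ``paper proof'' to match yours against. What can be said is that the quantitative core of your sketch --- differentiating $\mu(t,z)=\overline\partial X/\partial X$ along trajectories and integrating --- is exactly the computation the authors do carry out later, in Lemma \ref{support}, for the smooth approximants: there one finds $\frac{d}{dt}\mu = \overline\partial\bv(t,X)\,\frac{\overline{\partial X}}{\partial X}\,(1-|\mu|^2)$, hence $\frac{d}{dt}|\mu|\le |\overline\partial\bv(t,X)|\,(1-|\mu|^2)$ and $\log\frac{1+|\mu|}{1-|\mu|}\le 2\int_0^t\|\overline\partial\bv(s,\cdot)\|_\infty\,ds$, which is the claimed bound on $K_t$. (Your ``schematic Riccati form'' is close but not quite the right ODE --- there is no genuinely new quadratic source term, only the factor $1-|\mu|^2$ --- though this does not affect the Gr\"onwall step.)

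The genuine gap is in the first half of your argument, the existence and uniqueness of the flow. You assert that ``the log-Lipschitz-in-space bound that the growth and integrability hypotheses provide on compact sets'' yields an Osgood-type uniqueness. But the hypotheses only give $\partial\bv(t,\cdot)\in L^1_{\mathrm{loc}}$, and a field with merely locally integrable full gradient has no log-Lipschitz modulus of continuity whatsoever; nothing in ``growth plus local integrability'' licenses Osgood. The actual mechanism in Reimann's proof is that the single bounded quantity $\|\overline\partial\bv(t,\cdot)\|_\infty$ (the traceless symmetric part of $D\bv$ in the plane), combined with the growth condition at infinity, forces $\bv(t,\cdot)$ to lie in the Zygmund class, $|\bv(t,z+h)+\bv(t,z-h)-2\bv(t,z)|\le C|h|$, via a representation of $\bv$ through the Cauchy transform of $\overline\partial\bv$; the Zygmund estimate gives the modulus of continuity $|h|\log(1/|h|)$ and hence Osgood uniqueness and completeness of trajectories. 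Without this step your mollification-and-compactness scheme is also circular: you invoke ``uniqueness of the flow'' to identify the limit of the regularized flows, but uniqueness is precisely part of the conclusion and must be secured independently, which is what the Zygmund estimate does. So the dilatation bound is fine, but the proposal is missing the one idea that makes the flow exist and be unique under these weak hypotheses.
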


\noindent
We wish to remark here the existence of a counterpart to this theorem, with $\overline\partial \v$ replaced by $\overline\partial\v+\lambda\,\Im(\partial \v)$, where one may choose $\lambda$ to be a constant $\lambda\in\D$ or also a smooth, compactly supported function with $\|\lambda\|_\infty<1$. The change in the operator may result in a change in the bounds for $K_t$ as well. See \cite[Theorem 1]{CJ} for more details. This counterpart may produce extensions to Theorem \ref{main}, as we will explain later on.\\
\\
We will be using also the following elementary properties of the convolution with $K(z)=\frac{e^{i\theta}}{2\pi z}$. 

\begin{lem}\label{LinftyK}
Let $f:\C\to\C$ be given, and assume $f\in L^\infty$. 
\begin{itemize}
\item[(a)] If $f\in L^{1}$, then $K\ast f\in L^\infty$ and 
\begin{equation}\label{without support}
\|K\ast f\|_\infty\leq C\,\|f\|_1^{\frac12}\,\|f\|_\infty^{\frac12},
\end{equation} 
\item[(b)] If $f$ has compact support, then
\begin{equation}\label{with support}
\|K\ast f\|_\infty\leq C\,|\supp f|^{\frac12}\,\|f\|_\infty.
\end{equation}
\item[(c)] If $f$ is compactly supported, then 
$$\limsup_{|x|\to\infty} |x|\,|K\ast f(x)|\leq C<+\infty$$
with $C$ depending only on $\|f\|_\infty$ and $|\supp(f)|$. 
\end{itemize}
\end{lem}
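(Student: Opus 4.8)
The plan is to base everything on the single pointwise bound $|K(z)| = \frac{1}{2\pi|z|}$, so that all three parts reduce to estimating the quantity $\int_\C \frac{|f(w)|}{|z-w|}\,dw$ uniformly in $z$. The kernel $\frac{1}{2\pi|z|}$ fails to be integrable at infinity but belongs to weak-$L^2$, so the natural tool is a near/far decomposition of the domain of integration against a free parameter that is optimized at the end.

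For part (a), I would fix $R>0$ and split $\int_\C \frac{|f(w)|}{2\pi|z-w|}\,dw$ into the integral over $\{|z-w|<R\}$ and the one over $\{|z-w|\ge R\}$. On the near region bound $|f(w)|\le\|f\|_\infty$ and compute $\int_{|z-w|<R}\frac{dw}{2\pi|z-w|}=R$ in polar coordinates, which contributes at most $R\,\|f\|_\infty$. On the far region bound $\frac{1}{|z-w|}\le\frac1R$ and use $f\in L^1$, contributing at most $\frac{\|f\|_1}{2\pi R}$. Summing gives $|K\ast f(z)|\le R\,\|f\|_\infty+\frac{\|f\|_1}{2\pi R}$ for every $R$; choosing $R=\bigl(\|f\|_1/\|f\|_\infty\bigr)^{1/2}$ then yields \eqref{without support}. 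Part (b) follows immediately: a compactly supported bounded $f$ satisfies $\|f\|_1\le|\supp f|\,\|f\|_\infty$, and substituting this into \eqref{without support} gives \eqref{with support}. (Alternatively, for fixed $z$ the integral $\int_{\supp f}\frac{dw}{|z-w|}$ is, by the bathtub principle, maximized when $\supp f$ is replaced by the disk of equal area centered at $z$, for which it equals $2\sqrt{\pi\,|\supp f|}$; routing through (a) avoids invoking rearrangement.)

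For part (c), pick $M>0$ with $\supp f\subset\overline{B(0,M)}$. For $|x|>2M$ and $w\in\supp f$ one has $|x-w|\ge|x|-M\ge|x|/2$, hence $|x|\,|K\ast f(x)|\le\frac{|x|}{2\pi}\cdot\frac{2}{|x|}\,\|f\|_1=\frac{\|f\|_1}{\pi}\le\frac{|\supp f|\,\|f\|_\infty}{\pi}$, and letting $|x|\to\infty$ gives the bound with $C=|\supp f|\,\|f\|_\infty/\pi$. I do not expect a genuine obstacle here: the only points requiring mild care are the optimal choice of $R$ in (a) and, should one prefer the direct route in (b), the symmetric-decreasing-rearrangement step; everything else is a routine computation with the explicit kernel.
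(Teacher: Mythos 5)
Your proof is correct and follows essentially the same route as the paper: the same near/far decomposition in $R$ with the same optimization for (a), the same reduction $\|f\|_1\le|\supp f|\,\|f\|_\infty$ for (b), and the same far-field idea for (c). Your version of (c) is in fact marginally cleaner, since bounding $|x-w|\ge|x|/2$ and using $\|f\|_1$ yields a constant depending on $|\supp f|$ exactly as the statement asserts, whereas the paper's computation produces a constant involving $\diam(\supp f)^2$.
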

\begin{proof}
Let us consider a real number $R>0$. For each such $R$ and given any two arbitrary points $x$ and $y$ in the plane, we can always divide the plane into two regions, $|x-y|\leq R$ and $|x-y|>R$. Therefore,
$$\aligned
|K\ast f(x)|
&\leq\int_{\C}|K(x-y)\,f(y)|\,dA(y)\\
&=\int_{|x-y|\leq R}|K(x-y)\,f(y)|\,dA(y)+\int_{|x-y|>R}|K(x-y)\,f(y)|\,dA(y)\\
&\leq\|f\|_\infty\,\int_{|x-y|\leq R}\frac{C}{|x-y|}\,dA(y)+\frac{C}{R}\int_{|x-y|>R}|f(y)|dy\\
&\leq CR\,\|f\|_{\infty}+\frac{C}{R}\|f\|_1
\endaligned$$ 
If we minimize the term on the right hand of the inequality as a function of $R$, the best possible value attainable is $R=\|f\|_1^\frac12\,\|f\|_\infty^{-\frac12}.$ This gives the bound \eqref{without support}. The bound \eqref{with support} is an immediate consequence of \eqref{without support}. Concerning (c), let us assume that $diam(\supp(f))=2R$. It is not restrictive to assume $0\in\supp(f)$, so that $\supp(f)\subset \overline{D(0,R)}$. Then, at points $x$ such that $|x|>2R$ one has
$$\aligned
|K\ast f(x)|
&\leq C\int_{|y-x|\leq R}\frac{|f(y-x)|}{|y|}\,dA(y)\\ 
&\leq C\|f\|_\infty\,\int_{|y-x|\leq R}\frac{1}{|y|}\,dA(y)\\ 
&\leq C\|f\|_\infty\,\frac{R^2}{|x|-R}\leq C\,\frac{\|f\|_\infty\,\diam(\supp(f))^2}{|x|}, \\ 
\endaligned$$
as claimed.
\end{proof}

\noindent
Next, in order to proceed with the proof of Theorem \ref{main}, we start with a Lemma. 

\begin{lem}\label{qcflows}
Let $T>0$ be fixed. Then: 
\begin{itemize}
\item[(a)] $X^\epsilon(t,\cdot)$ is $\mathcal{K}_t$-quasiconformal, with $1\leq\mathcal{K}_t\leq e^{|t|\|\omega_0\|_\infty}$, where $-T\leq t\leq T$.
\item[(b)] There exists a constant $C=C(\mathcal{K}_t)$ such that
$$
\frac1C\left(\frac{|z-z_0|}{|z-w_0|}\right)^{\mathcal{K}_t}\leq\frac{|X^\epsilon(t,z)-X^\epsilon(t,z_0)|}{|X^\epsilon(t,z)-X^\epsilon(t,w_0)|}\leq C\left(\frac{|z-z_0|}{|z-w_0|}\right)^\frac1{\mathcal{K}_t}
$$
for any $z,z_0,w_0\in\C$ and any time $t\in[-T,T]$. 
\item[(c)] There exists a constant $C=C(\mathcal{K}_t)$ such that  
$$\frac{|X^\epsilon(t,E)|}{|X^\epsilon(t,D)|}\leq C(\mathcal{K}_t)\,\left(\frac{|E|}{|D|}\right)^\frac1{\mathcal{K}_t}.$$
whenever $D\subset\C$ is a disk and $E\subset D$ is measurable.
\end{itemize}
\end{lem}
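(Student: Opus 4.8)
The plan: read off (a) from Reimann's Theorem \ref{Reimannth}, and obtain (b) and (c) from the classical distortion theory of planar quasiconformal maps with distortion constant $\mathcal{K}_t$. \textbf{Part (a).} Apply Theorem \ref{Reimannth} to $\v^\epsilon=K\ast\omega^\epsilon$. Its hypotheses hold: $\v^\epsilon(t,\cdot)$ is continuous, indeed $C^1$; its distributional derivatives $\partial\v^\epsilon(t,\cdot)$ and $\overline\partial\v^\epsilon(t,\cdot)$ lie in $L^p_{\mathrm{loc}}$ for every finite $p$, hence are locally integrable; by the identity \eqref{dbarv} applied to $\v^\epsilon$ one has $\overline\partial\v^\epsilon(t,\cdot)=\tfrac{e^{i\theta}}{2}\,\omega^\epsilon(t,\cdot)$, so $\|\overline\partial\v^\epsilon(t,\cdot)\|_\infty=\tfrac12\|\omega^\epsilon(t,\cdot)\|_\infty=\tfrac12\|\omega^\epsilon_0\|_\infty\le\tfrac12\|\omega_0\|_\infty$ for all $t$ and $\epsilon$, using the preservation in time of $\|\omega^\epsilon(t,\cdot)\|_\infty$ recorded just above; finally, at each fixed $t$ the function $\omega^\epsilon(t,\cdot)$ is bounded and compactly supported, so Lemma \ref{LinftyK}(c) gives $|\v^\epsilon(t,z)|\le C/|z|$ for large $|z|$ and the required $\limsup$ is $0$. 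Theorem \ref{Reimannth} then shows, for $t\in[0,T]$, that $X^\epsilon(t,\cdot)$ is $\mathcal{K}_t$-quasiconformal with $\mathcal{K}_t\le\exp\bigl(2\int_0^t\tfrac12\|\omega_0\|_\infty\,ds\bigr)=e^{t\|\omega_0\|_\infty}$; applying the same to the time-reversed field $-\v^\epsilon(-s,\cdot)$ covers $t\in[-T,0]$ and yields $\mathcal{K}_t\le e^{|t|\|\omega_0\|_\infty}$, while $\mathcal{K}_t\ge1$ is part of the definition of quasiconformality.

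\textbf{Part (b).} The map $X^\epsilon(t,\cdot)$ is a $\mathcal{K}_t$-quasiconformal self-homeomorphism of $\C$ (it fixes $\infty$, being the flow of a field that decays at infinity), and so is its inverse, with the same distortion. Such maps are quasisymmetric, and the two-power bounds in (b) are the classical form of their distortion function, which depends only on $\mathcal{K}_t$. Concretely, given distinct $z,z_0,w_0$, conjugate by the M\"obius transformations $\phi(\zeta)=\frac{\zeta-z_0}{\zeta-w_0}$ and $\psi(\xi)=\frac{\xi-X^\epsilon(t,z_0)}{\xi-X^\epsilon(t,w_0)}$: then $g=\psi\circ X^\epsilon(t,\cdot)\circ\phi^{-1}$ is $\mathcal{K}_t$-quasiconformal on the sphere fixing $0$ and $\infty$, $|\phi(z)|=\frac{|z-z_0|}{|z-w_0|}$, and $|g(\phi(z))|$ is precisely the ratio displayed in (b), so the claim reduces to a two-sided estimate of $|g(w)|$ in terms of $|w|$ for such $g$ — the upper bound being the distortion theorem and the lower bound obtained by applying it to $g^{-1}$. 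The one point that needs care is the bookkeeping that fixes the free scale of $g$ and keeps every constant dependent on $\mathcal{K}_t$ alone.

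\textbf{Part (c).} This is the scale-invariant form of Astala's area distortion theorem: for a $\mathcal{K}_t$-quasiconformal $f:\C\to\C$, a disk $D$, and a measurable $E\subset D$, $|f(E)|/|f(D)|\le C(\mathcal{K}_t)\,(|E|/|D|)^{1/\mathcal{K}_t}$. I would deduce it from the standard version (principal maps conformal outside $\D$) as follows: fix a concentric disk $D'$ with $D\subset\subset D'$ and, on $D'$, factor $X^\epsilon(t,\cdot)=g\circ h$ with $h$ principal $\mathcal{K}_t$-quasiconformal and conformal outside $D'$, and $g$ conformal on $h(D')$; after rescaling $D'$ to $\D$, apply Astala's theorem to $h$ and the Koebe distortion theorem to $g$ on $h(D)\subset\subset h(D')$ — using the estimates of (b), now for $h$, to control $h(D)$ relative to $h(D')$ — and unwind the normalizations. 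The genuine content of (b) and (c) is thus imported from quasiconformal theory, and the real work is the reduction to normalized configurations keeping all constants controlled by $\mathcal{K}_t$; of these, controlling the shape of the image domain is the step I expect to be the most delicate, and part (b) is exactly what handles it.
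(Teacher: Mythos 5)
Your argument is correct and matches the paper's: part (a) is essentially verbatim the paper's verification of Reimann's hypotheses (the kernel identity $2\overline\partial\v^\epsilon=e^{i\theta}\omega^\epsilon$, conservation of $\|\omega^\epsilon(t,\cdot)\|_\infty$, and the decay from Lemma \ref{LinftyK}(c)), while for (b) and (c) the paper simply cites the standard quasisymmetry and area-distortion results from Astala--Iwaniec--Martin that you sketch proofs of. The one bookkeeping point you leave open in (b) resolves itself: since $X^\epsilon(t,\cdot)$ fixes $\infty$ and both of your M\"obius maps send $\infty$ to $1$, the conjugated map $g$ fixes $0$, $1$ and $\infty$, which is exactly the normalization that pins down its free scale.
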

\begin{proof}
The structure of the Cauchy Kernel makes it clear that
$$2\|\overline\partial{\v}^\epsilon(t,\cdot)\|_\infty=  \|\omega^\epsilon(t,\cdot)\|_\infty=\|\omega^\epsilon_0\|_\infty\leq\|\omega_0\|_\infty.$$
Moreover, from Lemma \ref{LinftyK} (c) we know that $\v^{\epsilon}(t,\cdot)$ vanishes at $\infty$ like $\frac{C}{|z|}$. Therefore,
$$\aligned
\underset{|z|\to\infty}{\limsup}\frac{|\v^{\epsilon}(t,z)|}{|z|\log\left(e+|z|\right)}&\leq C<+\infty.
\endaligned$$
Thus, all the requirements in Reimann's Theorem \ref{Reimannth} are fulfilled, and quasiconformality follows, with quasiconformality constant 
$$\mathcal{K}_t\leq \exp\left(2\int_0^t\|\overline\partial \v^\epsilon(s,\cdot)\|_\infty ds\right)\leq e^{t\|\omega_0\|_\infty}$$
and by definition, $\mathcal{K}_t\geq1$. Therefore, part (a) is clear. Part (b)  says that quasiconformal maps are quantitatively quasisymmetric. The interested reader should check \cite[Corollary 3.10.4]{AIM} for a detailed proof. Part (c) follows from \cite[Theorem 13.1.5]{AIM} and the classical area distortion estimates for $\mathcal{K}_t$-quasiconformal maps.
\end{proof}

\noindent
Next, we would like to find an \emph{accumulation point} $X(t,\cdot)$ of the \emph{trajectories} $X^\epsilon(t,\cdot)$. As always, this will be done by using the \emph{control in time} of the $L^1$ norm of $\omega^\epsilon$. However, in contrast to Euler's situation, this time the control will be obtained in a completely different way. As a first step, let us note that \emph{compactness of the flow} will be a direct consequence of \emph{local boundedness}. 

\begin{lem}\label{compactness}
Assume that $X^\epsilon(t,\cdot)$ is uniformly bounded on compact sets. Then:
\begin{enumerate}
\item[(a)] $\{X^\epsilon(t,\cdot)\}_\epsilon$ is pointwise equicontinuous.
\item[(b)] $\{X^\epsilon(t,\cdot)\}_\epsilon$ accumulates to a $\mathcal{K}_t$-quasiconformal map $X(t,\cdot)$.
\end{enumerate}
\end{lem}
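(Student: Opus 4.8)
The plan is to establish both claims as consequences of the Arzel\`a--Ascoli theorem, with the quasisymmetry estimate of Lemma~\ref{qcflows}(b) providing the crucial equicontinuity.

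For part (a), fix a point $z\in\C$ and a time $t\in[-T,T]$, and pick an auxiliary point $w_0$ at some fixed distance, say $|z-w_0|=1$. By hypothesis there is a radius $R>0$ and a constant $M$ such that $|X^\epsilon(t,\zeta)|\leq M$ for all $\zeta$ in the closed ball $\overline{D(z,R)}$ and all $\epsilon$; in particular $|X^\epsilon(t,z)-X^\epsilon(t,w_0)|\leq 2M$ whenever $w_0$ is also taken inside this ball. Applying the right-hand inequality in Lemma~\ref{qcflows}(b) with $z_0$ replaced by a nearby point $z'$ gives
$$
|X^\epsilon(t,z)-X^\epsilon(t,z')|\leq C\,|X^\epsilon(t,z)-X^\epsilon(t,w_0)|\,|z-z'|^{1/\mathcal K_t}\,|z-w_0|^{-1/\mathcal K_t}\leq C'\,|z-z'|^{1/\mathcal K_t},
$$
where $C'$ depends only on $\mathcal K_t$, $M$ and the fixed normalization, but \emph{not} on $\epsilon$. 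Since $1\leq\mathcal K_t\leq e^{|t|\|\omega_0\|_\infty}\leq e^{T\|\omega_0\|_\infty}$ uniformly in $\epsilon$, the exponent $1/\mathcal K_t$ stays bounded below by a positive constant, so the modulus of continuity is uniform in $\epsilon$. This is exactly pointwise equicontinuity (in fact local uniform H\"older continuity with a uniform exponent). A symmetric use of the left-hand inequality shows the limits cannot degenerate, which will be needed in (b).

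For part (b): the family $\{X^\epsilon(t,\cdot)\}_\epsilon$ is, by hypothesis, uniformly bounded on compact sets, and by (a) it is equicontinuous; hence by Arzel\`a--Ascoli every sequence $\epsilon_k\to 0$ has a subsequence converging locally uniformly on $\C$ to some continuous map $X(t,\cdot):\C\to\C$. It remains to identify the limit as a $\mathcal K_t$-quasiconformal map. Each $X^\epsilon(t,\cdot)$ is $\mathcal K_t$-quasiconformal with the \emph{same} bound $\mathcal K_t\leq e^{|t|\|\omega_0\|_\infty}$, and it is the standard compactness principle for quasiconformal maps (see \cite[Chapter~3]{AIM}) that a locally uniform limit of $\mathcal K$-quasiconformal maps is either $\mathcal K$-quasiconformal or constant. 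Ruling out the constant case is where the lower bound from Lemma~\ref{qcflows}(b) enters: passing to the limit in the two-sided quasisymmetry estimate shows that $X(t,\cdot)$ is injective (distinct points have distinct images, since the ratio on the right is bounded below by a positive quantity), so it is nonconstant, and therefore $\mathcal K_t$-quasiconformal.

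The main obstacle is making sure the equicontinuity is genuinely \emph{uniform in $\epsilon$}: one must check that the constant $C=C(\mathcal K_t)$ in Lemma~\ref{qcflows}(b) and the auxiliary bound on $|X^\epsilon(t,z)-X^\epsilon(t,w_0)|$ do not secretly depend on $\epsilon$. The distortion constant $\mathcal K_t$ is controlled uniformly (part (a) of Lemma~\ref{qcflows}), and the image bound is supplied precisely by the standing hypothesis that $\{X^\epsilon(t,\cdot)\}$ is uniformly bounded on compacta; once these two facts are in hand the estimate above is immediate. The only subtlety worth a sentence is that the limit map a priori depends on the chosen subsequence, so the statement is correctly phrased as ``accumulates to,'' and no uniqueness of the accumulation point is asserted at this stage.
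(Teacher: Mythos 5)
Your proof follows essentially the same route as the paper's: the quasisymmetry estimate of Lemma~\ref{qcflows} combined with the assumed local uniform boundedness gives equicontinuity, Arzel\`a--Ascoli produces a locally uniform accumulation point, and the standard compactness principle for quasiconformal maps together with the two-sided quasisymmetry bounds identifies the limit as $\mathcal K_t$-quasiconformal rather than constant. One shared subtlety (present in the paper's proof as well): the two-sided quasisymmetry bound alone does not literally exclude a degenerate limit, since both the numerator and the denominator of the ratio may vanish simultaneously for a constant limit; in the actual application nonconstancy really comes from the uniform normalization $|X^\epsilon(t,z)-z|\le C$ established afterwards.
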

\begin{proof}
To prove the claim (a), let us remind from Lemma \ref{qcflows} that $X^\epsilon(t,\cdot)$ is quasisymmetric. That is, given any three points $z_0,z,w\in\C$ we have
$$
\frac{|X^\epsilon(t,z )-X^\epsilon(t,z_0)|}{|X^\epsilon(t,w)-X^\epsilon(t,z_0)|}\leq \eta_{\mathcal{K}_t}\left(\frac{|z -z_0|}{|w-z_0|}\right).
$$
As a consequence
$$\aligned
 |X^\epsilon(t,z )-X^\epsilon(t,z_0)| 
 &\leq \eta_{\mathcal{K}_t}(|z-z_0|/|w-z_0|) |X^\epsilon(t,w)-X^\epsilon(t,z_0)|\\
 &\leq \eta_{\mathcal{K}_t}(|z-z_0|/|w-z_0|) (|X^\epsilon(t,w)|+|X^\epsilon(t,z_0)|)\\
 &\leq \eta_{\mathcal{K}_t}(|z-z_0|/|w-z_0|) ( C(t,|w|) +C(t,|z_0|))
\endaligned$$
In particular, by leaving $w$ fixed one can easily get that $X^\epsilon(t,\cdot)$ is equicontinuous at $z_0$. \\
The family of maps, $\{X^\epsilon(t,\cdot)\}_\epsilon$ is pointwise equicontinuous and locally uniformly bounded. Therefore, Arzela-Ascoli theorem ensures the existence of a locally uniform accumulation point $X(t,\cdot)$. It is worth mentioning that by classical tools in \emph{Geometric Function Theory} \cite[Theorem 3.1.3]{AIS} it can only be either $\mathcal{K}_t$-quasiconformal or constant. To see that it cannot be constant, one must observe that the quasisymmetry bounds are preserved by uniform limits. Being two sided, these quasisymmetry bounds guarantee bijectivity. Therefore, the accumulation point $X(t,\cdot)$ is $\mathcal{K}_t$-quasiconformal. 
\end{proof}


\noindent
In order to get the local boundedness of the flow, the key point is the following elementary fact.

\begin{lem}\label{support}
Let $X^\epsilon(t,\cdot)$ be as before, and assume that $\omega_0^\epsilon$ has compact support. Then 
$$\omega^\epsilon_0(z)=0\hspace{1cm}\Longrightarrow\hspace{1cm} \overline\partial X^\epsilon(t,z)=0,$$
in other words $X^\epsilon(t,\cdot)$ is conformal outside of $\supp\omega_0^\epsilon$.
\end{lem}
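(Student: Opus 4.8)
The plan is to write down, for each base point $z$ in the complement of $\supp\omega_0^\epsilon$, a linear homogeneous ODE in the time variable satisfied by $\overline\partial X^\epsilon(t,z)$, and then conclude by uniqueness of solutions.

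First I would fix the regularity that makes the manipulation legitimate. Since $\omega_0^\epsilon\in C_c^\infty$ and $\omega^\epsilon$ is obtained by transporting it along the flow of the smooth field $\v^\epsilon=K\ast\omega^\epsilon$, we have $\omega^\epsilon(t,\cdot)\in C_c^\infty(\C)$, hence $\v^\epsilon(t,\cdot)\in C^\infty(\C)$, and by smooth dependence of ODE flows on initial data the map $X^\epsilon$ is $C^\infty$ in $z$ with derivatives jointly continuous in $(t,z)$. In particular the Jacobian of $X^\epsilon(t,\cdot)$, equivalently the pair $(\partial X^\epsilon,\overline\partial X^\epsilon)$, solves the variational equation obtained by linearizing $\partial_t X^\epsilon=\v^\epsilon(t,X^\epsilon)$ in the space variables. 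Writing this in Wirtinger form, using the chain rule $\partial_{\bar z}[\v^\epsilon(X^\epsilon)]=(\partial\v^\epsilon)(X^\epsilon)\,\overline\partial X^\epsilon+(\overline\partial\v^\epsilon)(X^\epsilon)\,\partial_{\bar z}\overline{X^\epsilon}$ together with $\partial_{\bar z}\overline{X^\epsilon}=\overline{\partial X^\epsilon}$, gives
$$\partial_t\big(\overline\partial X^\epsilon(t,z)\big)=(\partial\v^\epsilon)\big(t,X^\epsilon(t,z)\big)\,\overline\partial X^\epsilon(t,z)+(\overline\partial\v^\epsilon)\big(t,X^\epsilon(t,z)\big)\,\overline{\partial X^\epsilon(t,z)},$$
with initial value $\overline\partial X^\epsilon(0,z)=\overline\partial z=0$.

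The key observation is then that the inhomogeneous term vanishes identically along any trajectory starting where the datum is zero. Indeed, the structural identity \eqref{dbarv} for $\v^\epsilon$ reads $\overline\partial\v^\epsilon(t,\cdot)=\tfrac{e^{i\theta}}{2}\,\omega^\epsilon(t,\cdot)$, and the transport representation $\omega^\epsilon(t,X^\epsilon(t,z))=\omega_0^\epsilon(z)$ yields
$$(\overline\partial\v^\epsilon)\big(t,X^\epsilon(t,z)\big)=\tfrac{e^{i\theta}}{2}\,\omega_0^\epsilon(z).$$
Hence, if $\omega_0^\epsilon(z)=0$, the last term in the displayed ODE drops out and $t\mapsto\overline\partial X^\epsilon(t,z)$ solves the scalar linear homogeneous equation $y'(t)=a(t)\,y(t)$ with $y(0)=0$, where $a(t)=(\partial\v^\epsilon)(t,X^\epsilon(t,z))$ is continuous on $[-T,T]$. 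By uniqueness for linear ODEs, $\overline\partial X^\epsilon(t,z)=0$ for every $t$. Since $X^\epsilon(t,\cdot)$ is a diffeomorphism, the vanishing of $\overline\partial X^\epsilon$ at $z$ means it is conformal at $z$; as $z$ was arbitrary outside $\supp\omega_0^\epsilon$, the lemma follows.

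I do not expect a genuine obstacle here: the computation is short once the variational equation is written in complex notation. The only points that require a little care are the regularity bookkeeping that justifies differentiating the flow equation and interchanging $\partial_t$ with $\partial$, $\overline\partial$ (handled by the smoothness of $\v^\epsilon$ and classical smooth dependence of flows on initial conditions), and the fact that the uniqueness argument is applied pointwise, with $z$ frozen in $\C\setminus\supp\omega_0^\epsilon$, so that $\omega_0^\epsilon(z)$ genuinely equals the constant $0$ as a function of $t$.
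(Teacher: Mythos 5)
Your proof is correct, and it isolates exactly the same two ingredients the paper uses: the variational equation obtained by applying $\overline\partial$ to $\frac{d}{dt}X^\epsilon=\v^\epsilon(t,X^\epsilon)$, and the identity $(\overline\partial\v^\epsilon)(t,X^\epsilon(t,z))=\tfrac{e^{i\theta}}{2}\,\omega^\epsilon(t,X^\epsilon(t,z))=\tfrac{e^{i\theta}}{2}\,\omega_0^\epsilon(z)$ coming from \eqref{dbarv} and the transport representation. Where you genuinely differ is in how the conclusion is extracted. The paper combines the variational equations for $\partial X^\epsilon$ and $\overline\partial X^\epsilon$ into an expression for $\frac{d}{dt}\mu^\epsilon$, where $\mu^\epsilon=\overline\partial X^\epsilon/\partial X^\epsilon$ is the Beltrami coefficient, and integrates the resulting differential inequality to the quantitative pointwise bound \eqref{pointw}, namely $\log\bigl(\tfrac{1+|\mu^\epsilon(t,z)|}{1-|\mu^\epsilon(t,z)|}\bigr)\leq t\,|\omega_0^\epsilon(z)|$; the lemma is then the special case $\omega_0^\epsilon(z)=0$. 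You instead keep the equation for $\overline\partial X^\epsilon$ itself, note that the only inhomogeneous term carries the factor $\omega_0^\epsilon(z)$, and conclude from uniqueness for the scalar linear ODE $y'=a(t)y$, $y(0)=0$ (equivalently $y(t)=y(0)\exp\int_0^t a$). Your route is shorter and cleaner for the stated claim, since it avoids dividing by $\partial X^\epsilon$ and tracking $|\mu^\epsilon|<1$; what the paper's route buys is the quantitative estimate \eqref{pointw} and the formula $\frac{d}{dt}[\mu^\epsilon(t,z)]_{t=0}=\tfrac12\omega_0^\epsilon(z)$, which are reused in the Remark immediately following the lemma. The only cosmetic overreach in your write-up is asserting $C^\infty$ smoothness of $\omega^\epsilon(t,\cdot)$ and $\v^\epsilon(t,\cdot)$ where the paper only records continuity of $\omega^\epsilon(t,\cdot)$ and $C^1$ spatial regularity of $\v^\epsilon$; but $C^1$ in space is all your argument actually needs for the variational equation and the interchange of $\partial_t$ with $\overline\partial$, so nothing is lost.
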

\begin{proof}
The $\mathcal{K}_t$-quasiconformality of $X^\epsilon(t,\cdot)$ ensures the existence of a well-defined, uniformly elliptic Beltrami coefficient $\mu^\epsilon(t,\cdot)=\frac{\overline\partial X^\epsilon(t,\cdot)}{\partial X^\epsilon(t,\cdot)}$, and moreover we know that 
$$\|\mu^\epsilon(t,\cdot)\|_\infty\leq\frac{\mathcal{K}_t-1}{\mathcal{K}_t+1}.$$
The smoothness in time of $\partial X^\epsilon(t,z)$ and $\overline\partial X^\epsilon(t,z)$ guarantees that $t\mapsto \mu^\epsilon(t,z)$ is also smooth. From the equation for the flow $\dot{X^\epsilon}(t,z)=\v^\epsilon(t,X^\epsilon(t,z))$ and the chain rule we get that
$$\aligned
\overline\partial \v^\epsilon(t,X^\epsilon(t,z))
&=\frac{\frac{d}{dt}\overline\partial X^\epsilon(t,z)\,\partial X^\epsilon(t,z)-\overline\partial X^\epsilon(t,z)\,\frac{d}{dt}\partial X^\epsilon(t,z)}{J^\epsilon(t,z)}\\
&=\frac{\frac{d}{dt}\mu^\epsilon(t,z)\,\left(\partial X^\epsilon(t,z)\right)^2}{J^\epsilon(t,z)}\\
&=\frac{\frac{d}{dt}\mu^\epsilon(t,z)}{1-|\mu^\epsilon(t,z)|^2}\,\frac{\partial X^\epsilon(t,z)}{\overline{\partial X^\epsilon(t,z)}}\\
\endaligned$$
On the other hand, from the kernel structure we have
$$
2|\overline\partial \v^\epsilon(t,X^\epsilon(t,z))|= |\omega^\epsilon(t,X^\epsilon(t,z))|=|\omega^\epsilon_0(z)|.
$$
Thus
$$
\aligned
\frac{ \frac{d}{dt}|\mu^\epsilon(t,z)|}{1-|\mu^\epsilon(t,z)|^2}
&\leq\frac{\left|\frac{d}{dt}\mu^\epsilon(t,z)\right|}{1-|\mu^\epsilon(t,z)|^2}=\frac12\,|\omega^\epsilon_0(z)|
\endaligned$$
Now, given any time $t>0$, we can integrate on $(0,t)$ the above inequality to obtain that
\begin{equation}\label{pointw}
\log\left(\frac{1+|\mu^\epsilon(t,z)|}{1-|\mu^\epsilon(t,z)|}\right)\leq t\,|\omega^\epsilon_0(z)|,
\end{equation}
since $X^\epsilon(0,z)=z$ implies $\mu^\epsilon(0,z)=0$. Now, if $\omega^\epsilon_0(z)=0$ then necessarily $\mu^\epsilon(t,z)=0$ and hence $\overline\partial X^\epsilon(t,z)=0$. The claim follows.
\end{proof}

\begin{rem}
The above proof also shows that, at time $t=0$,
$$\frac{\omega^{\epsilon}_{0}(z)}{2}=\frac{1}{2}\,\omega^\epsilon(0,\cdot)(z)=\overline\partial \bv^\epsilon(0, z)=\frac{d}{dt}\left[\mu^\epsilon(t,z)\right]_{t=0}.$$
That is, the initial vorticity determines the time derivative of the Beltrami coefficient at time $t=0$. Thus, it is natural to ask for the dependence of $X^\epsilon(t,\cdot)$ under second-order perturbations of $\mu^\epsilon(t,z)$.
\end{rem}

\noindent
Now, it just remains to observe that $\v^\epsilon(t,\cdot)$ cannot grow without control as $|z|\to\infty$. This, together with the conformality of the flow outside of $\supp\omega^\epsilon_0$, provides improved area estimates which are essential for the control of $\|\omega^\epsilon(t,\cdot)\|_1$.

\begin{lem}\label{principal}
Let $X^\epsilon(t,\cdot)$ be as before, and assume that $\omega_0$ has compact support.
\begin{itemize}
\item[(a)] For each $t,\epsilon$ there exists $b^\epsilon(t)\in\C$ such that $\lim_{|z|\to\infty}|X^\epsilon(t,z)-z-b^\epsilon(t)|=0.$
\item[(b)] One has $|X^\epsilon(t,E)|\leq \mathcal{K}_t\,|E|$ for each set $E\supset \supp\omega_0^\epsilon$.
\end{itemize}
\end{lem}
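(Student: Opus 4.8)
The plan is to establish part (a) first, since it identifies $X^\epsilon(t,\cdot)$ as a \emph{principal} map, and then derive part (b) from the conformality outside the support (Lemma \ref{support}) combined with a global change-of-variables/area argument.

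For part (a), I would integrate the flow equation in time: since $X^\epsilon(0,z)=z$, one has
$$
X^\epsilon(t,z)-z=\int_0^t \v^\epsilon(s,X^\epsilon(s,z))\,ds.
$$
The key is to control $|\v^\epsilon(s,X^\epsilon(s,z))|$ as $|z|\to\infty$. By Lemma \ref{qcflows}(b), quasisymmetry forces $|X^\epsilon(s,z)|\to\infty$ as $|z|\to\infty$, uniformly in $s\in[0,T]$; then by Lemma \ref{LinftyK}(c) applied to $f=\omega^\epsilon(s,\cdot)$ (which is compactly supported with $\|\omega^\epsilon(s,\cdot)\|_\infty\le\|\omega_0\|_\infty$ and support of controlled measure), we get $|\v^\epsilon(s,X^\epsilon(s,z))|\le C/|X^\epsilon(s,z)|\to 0$ as $|z|\to\infty$, again uniformly in $s$. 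Actually, to get convergence to a fixed translation $b^\epsilon(t)$ rather than merely boundedness, I would argue more carefully: since $\v^\epsilon(s,\cdot)=K\ast\omega^\epsilon(s,\cdot)$ with $K$ the Cauchy kernel and $\omega^\epsilon(s,\cdot)$ compactly supported and bounded, the standard expansion of the Cauchy transform at infinity gives $\v^\epsilon(s,w)=\frac{e^{i\theta}}{2\pi w}\int\omega^\epsilon(s,\cdot)+O(1/|w|^2)$, and since $\int\omega^\epsilon(s,\cdot)$ may be nonzero, the velocity decays like $1/|w|$. Composing with $X^\epsilon(s,z)$ and using that $|X^\epsilon(s,z)|$ grows at least like a fixed power of $|z|$ (again by Lemma \ref{qcflows}(b)), the integrand $\v^\epsilon(s,X^\epsilon(s,z))$ tends to $0$ as $|z|\to\infty$ uniformly in $s$, so by dominated convergence $X^\epsilon(t,z)-z\to 0$; thus in fact $b^\epsilon(t)=0$ works. (If one prefers not to claim $b^\epsilon(t)=0$, one may instead invoke that a quasiconformal map which is conformal near infinity has a Laurent expansion $X^\epsilon(t,z)=az+b+O(1/|z|)$ there, and the normalization $X^\epsilon(0,z)=z$ together with continuity in $t$ pins down $a=1$; either route gives (a).)

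For part (b), let $E\supset\supp\omega_0^\epsilon$. By Lemma \ref{support}, $X^\epsilon(t,\cdot)$ is conformal on $\C\setminus\supp\omega_0^\epsilon$, hence holomorphic there with Jacobian $J^\epsilon(t,z)=|\partial X^\epsilon(t,z)|^2\ge 0$; on $\supp\omega_0^\epsilon$ it is $\mathcal{K}_t$-quasiconformal, so $J^\epsilon(t,z)\le \mathcal{K}_t\,|\partial X^\epsilon(t,z)|^2$ pointwise. The idea is to write, by the change-of-variables formula for the diffeomorphism $X^\epsilon(t,\cdot)$,
$$
|X^\epsilon(t,E)|=\int_E J^\epsilon(t,z)\,dA(z),
$$
and compare with the integral over all of $\C$. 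Since $X^\epsilon(t,\cdot)-z$ vanishes at infinity by part (a), a Cauchy–Green / Stokes-type identity (equivalently, the fact that a principal map has total Jacobian "area" equal to that of its complement in the appropriate signed sense) yields $\int_\C \bigl(J^\epsilon(t,z)-1\bigr)\,dA(z)=0$ — more precisely, one uses that $\overline\partial X^\epsilon$ is supported in $\supp\omega_0^\epsilon$ and the Cauchy transform representation $X^\epsilon(t,z)=z+\mathcal{C}[\overline\partial X^\epsilon(t,\cdot)](z)$ to show the net area distortion is zero. Combining, $\int_E J^\epsilon = |E| + \int_E(J^\epsilon-1) = |E| + \int_{\supp\omega_0^\epsilon}(J^\epsilon - 1)$, and on $\supp\omega_0^\epsilon$ we bound $J^\epsilon\le\mathcal{K}_t|\partial X^\epsilon|^2$ while using the conformality identity to trade the $|\partial X^\epsilon|^2$ integral on the support against the area of its image; carrying the constant through gives $|X^\epsilon(t,E)|\le\mathcal{K}_t|E|$.

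The main obstacle I expect is the area-balance step in part (b): making rigorous that a principal ($X^\epsilon(t,z)=z+o(1)$ at $\infty$), conformal-outside-a-compact-set quasiconformal map satisfies the sharp bound $|X^\epsilon(t,E)|\le\mathcal{K}_t|E|$ for sets containing the support, as opposed to the cruder $\int_E J^\epsilon\le\mathcal{K}_t\int_E|\partial X^\epsilon|^2$ which is not yet an estimate by $|E|$. The clean way is to note that conformality on $\C\setminus\supp\omega_0^\epsilon$ plus the principal normalization forces $\int_{\C\setminus\supp\omega_0^\epsilon}|\partial X^\epsilon(t,z)|^2\,dA(z)=|\C\setminus\supp\omega_0^\epsilon \text{'s image}|$, and the identity $\int_\C(|\partial X^\epsilon|^2-|\overline\partial X^\epsilon|^2)\,dA = \int_\C 1\,dA$ in the sense of the area formula for principal maps lets one localize the excess to the support; the bookkeeping with $\|\mu^\epsilon\|_\infty$ and $\mathcal{K}_t$ then produces exactly the factor $\mathcal{K}_t$. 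The decay rate from part (a) (whether $O(1/|z|)$ or just $o(1)$) is what guarantees these improper integrals converge and the boundary terms at infinity vanish, so the two parts are genuinely used in tandem.
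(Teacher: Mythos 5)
Your part (a) follows essentially the same route as the paper: integrate the flow ODE, use Lemma \ref{LinftyK} to control the velocity at infinity, and combine with the conformality of $X^\epsilon(t,\cdot)$ near $\infty$ (Lemma \ref{support}); your main line even yields the slightly stronger conclusion $b^\epsilon(t)=0$, which is correct for fixed $t,\epsilon$. One caveat: the parenthetical fallback, that ``the normalization $X^\epsilon(0,z)=z$ together with continuity in $t$ pins down $a=1$,'' is not a valid argument on its own --- a continuous coefficient $a^\epsilon(t)$ with $a^\epsilon(0)=1$ need not remain equal to $1$. What actually forces $a^\epsilon(t)=1$ is the sublinear growth $|X^\epsilon(t,z)-z|=o(|z|)$, which the paper extracts from the $L^\infty$ bound of Lemma \ref{LinftyK}(b) applied along the trajectories; your dominated-convergence argument also delivers this, so drop the fallback rather than rely on it.

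The genuine gap is in part (b). You correctly reduce the claim to the area-distortion property of a principal $\mathcal{K}_t$-quasiconformal map that is conformal outside the compact set $\supp\omega_0^\epsilon$, but you do not prove the inequality $|X^\epsilon(t,E)|\le\mathcal{K}_t\,|E|$: the ``bookkeeping with $\|\mu^\epsilon\|_\infty$ and $\mathcal{K}_t$'' you defer is precisely the content of the theorem. The pointwise inequalities you invoke cannot close it --- $J^\epsilon\le\mathcal{K}_t|\partial X^\epsilon|^2$ is trivially true (indeed $J^\epsilon\le|\partial X^\epsilon|^2$ always) and points in the unhelpful direction. What is actually needed is the global $L^2$ argument: write $\partial X^\epsilon=1+\mathcal{S}(\overline\partial X^\epsilon)$ with $\mathcal{S}$ the Beurling transform (an $L^2$ isometry), use that $\overline\partial X^\epsilon$ is supported in $\supp\omega_0^\epsilon\subset E$ to get $\|\overline\partial X^\epsilon\|_{L^2}\le k\,(|E|^{1/2}+\|\overline\partial X^\epsilon\|_{L^2})$ with $k=(\mathcal{K}_t-1)/(\mathcal{K}_t+1)$, and then assemble $\int_E J^\epsilon$ from these bounds; moreover your identity $\int_\C(J^\epsilon-1)=0$ holds only as a conditionally convergent (principal value) integral, so the ``boundary terms vanish'' step also needs justification. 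The paper avoids all of this by citing the known area distortion theorem for principal quasiconformal maps conformal outside a compact set (it quotes it as \cite[Theorem 13.1.2]{AIS}); either do the same or carry out the $L^2$ computation in full, since as written the decisive step of (b) is asserted rather than proved.
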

\begin{proof}
By Lemma \ref{support} we know that $X^\epsilon(t,\cdot)$ is conformal on a neighborhood of $\infty$. Therefore, it has around $\infty$ a Laurent series development whose higher order term is linear,
$$
X^\epsilon(t,z)=a^\epsilon(t)z + b^\epsilon(t)+\frac{c^\epsilon(t)}{z}+...
$$
Also, from Lemma \ref{LinftyK} (b) and the integral representation of $X^\epsilon(t,\cdot)$, we know that
$$\aligned
 |X^\epsilon(t,z)-z| 
&= \left|\int_0^t\v^\epsilon(s,X^\epsilon(s,z))\right| \,ds\\
&\leq \int_0^t \left|\v^\epsilon(s,X^\epsilon(s,z))\right|\,ds\\
&\leq \int_0^tC(K)\,\|\omega^\epsilon(s,\cdot)\|_\infty\,|\supp \omega^\epsilon(s,\cdot)|^\frac12\,ds\\
&\leq \int_0^t C(K)\,\|\omega^\epsilon_0\|_\infty\,|X^\epsilon(s,\supp \omega^\epsilon_0)|^\frac12 \,ds\\
&\leq C(K)\,\|\omega^\epsilon_0\|_\infty \,\int_0^t |X^\epsilon(s,\supp \omega^\epsilon_0)|^\frac12 \,ds\\
&\leq  C(K)\,\|\omega^\epsilon_0\|_\infty\,t\,|\supp\omega^\epsilon_0|^\frac12\,\max_{0\leq s\leq t}\|\det DX^\epsilon(s,\cdot)^\frac12\|_{L^\infty(\supp\omega^\epsilon_0)}.
\endaligned$$
Above, the maximum term on the right hand side  (even depending on $t$ and $\epsilon$) is finite and stays bounded as $|z|\to\infty$, due to the smoothness in $t$ and $z$ of $X^\epsilon(t,z)$. Thus, for every fixed $t$ and $\epsilon>0$ one has
\begin{equation}\label{limit}
\lim_{|z|\to\infty}\frac{|X^\epsilon(t,z)-z|}{|z|}=0.\end{equation}
As a consequence, \eqref{limit} tells us that necessarily $a^\epsilon(t)=1$, and so (a) follows. To see (b), we observe that $X^\epsilon(t,\cdot)-b^\epsilon(t)$ is a  \emph{principal} $\mathcal{K}_t$-quasiconformal map, because 
$$
|X^\epsilon(t,z)-b^\epsilon(t)-z|= O(1/|z|)
$$
as $|z|\to\infty$. Moreover, it is conformal outside of $\supp\omega_0^\epsilon$ by Lemma \ref{support}. Hence, by \cite[Theorem 13.1.2]{AIS}, we have the following area distortion estimates,
$$
|X^\epsilon(t,E)|=|X^\epsilon(t,E)-b^\epsilon(t)|\leq \mathcal{K}_t\,|E|
$$
$\forall E\supset\,\supp(\omega^{\epsilon}_{0})$, as claimed.
\end{proof}

\noindent
We are now in position of getting the $L^\infty$ bounds for $\v^\epsilon$.

\begin{prop}\label{normstability}
Assume that $K(z)=\frac{e^{i\theta}}{2\pi z}$. If $\omega_0$ is compactly supported, then 
\begin{itemize}
\item[(a)] $\|\omega^\epsilon(t,\cdot)\|_\infty\leq \|\omega_0\|_\infty$
\item[(b)] $\|\omega^\epsilon(t,\cdot)\|_1\leq \|\omega _0\|_\infty\,e^{t\|\omega_0\|_\infty}\,|\supp\omega^\epsilon_0|$.
\item[(c)] $\|\bv^\epsilon(t,\cdot)\|_\infty\leq C(K) \,e^{\frac{t}2\,\|\omega_0\|_\infty}\,\|\omega_0\|_\infty\,|\supp\omega_0^\epsilon|^\frac12$.
\end{itemize}
\end{prop}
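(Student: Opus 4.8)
The plan is to prove the three bounds in sequence, since (b) depends on (a), and (c) depends on both (a) and (b) together with Lemma~\ref{LinftyK}.

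\textbf{Part (a).} This is essentially already established in the discussion preceding the Lemmas. The smooth solution $\omega^\epsilon$ satisfies the representation formula $\omega^\epsilon(t,X^\epsilon(t,z))=\omega_0^\epsilon(z)$, so for almost every point $w$ in the image we have $|\omega^\epsilon(t,w)|\le\|\omega_0^\epsilon\|_\infty\le\|\omega_0\|_\infty$. Since $\div(\v^\epsilon)\in L^\infty$ for each fixed $\epsilon$, the flow $X^\epsilon(t,\cdot)$ is a diffeomorphism mapping null sets to null sets, so the essential supremum is transported correctly and $\|\omega^\epsilon(t,\cdot)\|_\infty=\|\omega_0^\epsilon\|_\infty\le\|\omega_0\|_\infty$.

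\textbf{Part (b).} Here is the crucial point, and the place where the quasiconformal machinery pays off. Write $S_\epsilon=\supp\omega_0^\epsilon$. By the representation formula, $\supp\omega^\epsilon(t,\cdot)=X^\epsilon(t,S_\epsilon)$, so combining with part (a),
$$\|\omega^\epsilon(t,\cdot)\|_1\le\|\omega_0\|_\infty\,|X^\epsilon(t,S_\epsilon)|.$$
Now I would invoke Lemma~\ref{principal}(b), which gives $|X^\epsilon(t,S_\epsilon)|\le\mathcal{K}_t\,|S_\epsilon|$; together with the bound $\mathcal{K}_t\le e^{t\|\omega_0\|_\infty}$ from Lemma~\ref{qcflows}(a), this yields $\|\omega^\epsilon(t,\cdot)\|_1\le\|\omega_0\|_\infty\,e^{t\|\omega_0\|_\infty}\,|S_\epsilon|$, which is exactly (b). (One should be slightly careful that Lemma~\ref{principal}(b) as stated requires $E\supset S_\epsilon$; taking $E=S_\epsilon$ itself is the borderline case, which is fine since $S_\epsilon$ is already a set containing the support, or one may pass to a limit over slightly enlarged sets.) An alternative, more Gr\"onwall-flavoured derivation — differentiating $|X^\epsilon(t,S_\epsilon)|$ in $t$, using the change of variables $\frac{d}{dt}\det DX^\epsilon=(\div\v^\epsilon)\det DX^\epsilon$, and bounding $\int_{X^\epsilon(t,S_\epsilon)}\div\v^\epsilon$ by $\|\overline\partial\v^\epsilon\|_\infty$ up to the conformal part that integrates to zero by the argument principle — would reproduce the same exponential rate $2\|\overline\partial\v^\epsilon\|_\infty=\|\omega_0\|_\infty$, but the area-distortion route via Lemma~\ref{principal} is cleaner and I would use that.

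\textbf{Part (c).} This is now a direct application of Lemma~\ref{LinftyK}. For each fixed $t$, $\omega^\epsilon(t,\cdot)$ is compactly supported with $\supp\omega^\epsilon(t,\cdot)=X^\epsilon(t,S_\epsilon)$, so estimate \eqref{with support} gives
$$\|\v^\epsilon(t,\cdot)\|_\infty=\|K\ast\omega^\epsilon(t,\cdot)\|_\infty\le C\,|X^\epsilon(t,S_\epsilon)|^{1/2}\,\|\omega^\epsilon(t,\cdot)\|_\infty.$$
Applying part (a) for the $L^\infty$ factor and Lemma~\ref{principal}(b) together with $\mathcal{K}_t\le e^{t\|\omega_0\|_\infty}$ for the area factor, one obtains
$$\|\v^\epsilon(t,\cdot)\|_\infty\le C(K)\,e^{\frac t2\|\omega_0\|_\infty}\,\|\omega_0\|_\infty\,|S_\epsilon|^{1/2},$$
which is (c). One could equally well start from \eqref{without support} with the pair of bounds from (a) and (b), which produces the same exponent $e^{\frac t2\|\omega_0\|_\infty}$; the two routes agree because $|S_\epsilon|^{1/2}$ and $(\|\omega_0\|_\infty\,e^{t\|\omega_0\|_\infty}|S_\epsilon|)^{1/2}\cdot\|\omega_0\|_\infty^{-1/2}$ match.

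\textbf{Main obstacle.} The only genuinely nontrivial ingredient is part (b), and its difficulty is entirely front-loaded into Lemma~\ref{principal}(b) and Lemma~\ref{support}: the fact that $X^\epsilon(t,\cdot)$ is a \emph{principal} quasiconformal map that is \emph{conformal outside} $S_\epsilon$, so that the sharp area-distortion estimate $|X^\epsilon(t,E)|\le\mathcal{K}_t|E|$ (rather than a bound blowing up with $\|\div\v^\epsilon\|_\infty$) is available. Once those structural facts are in hand, assembling (a), (b), (c) is bookkeeping. I would therefore present (a)--(c) as short consequences and make sure the reader sees that the $\epsilon$-uniformity is inherited solely from the $\epsilon$-uniform distortion bound $\mathcal{K}_t\le e^{t\|\omega_0\|_\infty}$, which in turn comes from the kernel identity $2\|\overline\partial\v^\epsilon\|_\infty=\|\omega^\epsilon_0\|_\infty\le\|\omega_0\|_\infty$.
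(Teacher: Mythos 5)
Your proposal is correct and follows essentially the same route as the paper: (a) from the representation formula and null-set preservation, (b) from the area-distortion bound $|X^\epsilon(t,\supp\omega_0^\epsilon)|\leq\mathcal{K}_t\,|\supp\omega_0^\epsilon|$ of Lemma \ref{principal}(b) combined with $\mathcal{K}_t\leq e^{t\|\omega_0\|_\infty}$ (the paper writes this via the change of variables $\int_{\supp\omega_0^\epsilon}J^\epsilon=|X^\epsilon(t,\supp\omega_0^\epsilon)|$, which is the same computation), and (c) from Lemma \ref{LinftyK}(b). Your side remarks on the borderline case $E=\supp\omega_0^\epsilon$ and on the equivalence of the two routes to (c) are accurate but not needed.
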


\begin{proof}
Claim (a) can be proved by recalling that $\omega^{\epsilon}(t,\cdot)\circ X^{\epsilon}(t,\cdot) =\omega_0^{\epsilon}(\cdot)$ and the facts that  $X^\epsilon(t,\cdot)$ preserves Lebesgue-null sets and $\|\omega_0^\epsilon\|_\infty\leq\|\omega_0\|_\infty$. 
For (b), we use Lemmas \ref{qcflows} (a), \ref{support} and \ref{principal} (b) to obtain
$$
\aligned
\|\omega^\epsilon(t,\cdot)\|_1
&=\int_{\C}|\omega^\epsilon(t,z)|\,dA(z)\\
&=\int_{\C}|\omega^\epsilon_0(\zeta)|\,J^\epsilon(t,\zeta)\,dA(\zeta)\\
&\leq \|\omega^\epsilon_0\|_\infty\,\int_{\supp \omega^\epsilon_0} J^\epsilon(t,\zeta)\,dA(\zeta)\\
&= \|\omega^\epsilon_0\|_\infty\,|X^\epsilon(t,\supp \omega^\epsilon_0)|\\
&\leq \|\omega^\epsilon_0\|_\infty\,\mathcal{K}_t \,|\supp\omega^\epsilon_0|\\
&\leq \|\omega_0\|_\infty\,e^{t\|\omega_0\|_\infty}\,|\supp\omega^\epsilon_0|
\endaligned
$$
as desired.  Estimate (c) follows from Lemma \ref{LinftyK} (b).
\end{proof}

\noindent
The control on $\|\v^\epsilon\|_{L^1(\R, L^\infty)}$ allows for local boundeness of $X^\epsilon(t,\cdot)$, since 
$$
|X^\epsilon(t,z)-z|\leq\int_0^t|\v^\epsilon(s,X^\epsilon(s,z))|\,ds\leq \|\v^\epsilon\|_{L^1((0,t),L^\infty)}\leq C(K)\,|\supp\omega_0|^\frac12\,e^{\frac{t}2\,\|\omega_0\|_\infty}$$
and so Lemma \ref{compactness} guarantees the existence of a limit flow map $X(t,\cdot):\C\to\C$ which is $\mathcal{K}_t$-quasiconformal at each time $t$. Setting then $\omega(t,\cdot)=\omega_0(X(-t,\cdot))$, we obtain a well defined $L^1((0,t),L^\infty)$ function. We also define $\v(t,\cdot)=K\ast\omega(t,\cdot)$.

\begin{theo}\label{convergence}
With the above notation,
\begin{enumerate}
\item[(a)]  $\|\omega^\epsilon(t,\cdot)-\omega(t,\cdot)\|_1\to 0$, and 
\item[(b)]  $\|\bv^\epsilon(t,\cdot)-\bv(t,\cdot)\|_\infty\to 0$ .
\end{enumerate}
\end{theo}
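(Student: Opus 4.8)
The plan is to establish the two convergences by first passing to a subsequence along which the quasiconformal flows $X^\epsilon(t,\cdot)$ converge locally uniformly to the limit $X(t,\cdot)$ constructed above, and then to upgrade this to convergence of $\omega^\epsilon$ and $\v^\epsilon$ in the stated norms. Since all estimates have already been made uniform in $\epsilon$, the only genuine work is identifying the limits correctly and handling the inverse maps.

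\textbf{Step 1: convergence of the flows and their inverses.} By Lemma \ref{compactness}, along a subsequence $X^\epsilon(t,\cdot)\to X(t,\cdot)$ locally uniformly, with $X(t,\cdot)$ a $\mathcal{K}_t$-quasiconformal map; by the same lemma applied to $-t$ (or to the inverse flow, which solves the ODE with velocity $-\v^\epsilon$), the inverse maps $X^\epsilon(-t,\cdot)=\left(X^\epsilon(t,\cdot)\right)^{-1}$ also converge locally uniformly to $X(-t,\cdot)=\left(X(t,\cdot)\right)^{-1}$. The key point is that the two-sided quasisymmetry bounds in Lemma \ref{qcflows}(b) pass to the limit, so $X(t,\cdot)$ is a genuine homeomorphism and $X^\epsilon(-t,\cdot)\to X(-t,\cdot)$ pointwise, hence locally uniformly by equicontinuity. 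Moreover, by Lemma \ref{support} and Lemma \ref{principal}(b), the pushed-forward supports $X^\epsilon(t,\supp\omega_0^\epsilon)$ stay inside a fixed compact set $B$ independent of $\epsilon$ and $t\in[-T,T]$, and likewise $X^\epsilon(-t,\cdot)$ moves $B$ into a fixed compact set; this confinement is what makes the $L^1$ estimates below uniform.

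\textbf{Step 2: convergence of $\omega^\epsilon$ in $L^1$.} Write $\omega^\epsilon(t,\cdot)=\omega_0^\epsilon\circ X^\epsilon(-t,\cdot)$ and $\omega(t,\cdot)=\omega_0\circ X(-t,\cdot)$. Split
$$\|\omega^\epsilon(t,\cdot)-\omega(t,\cdot)\|_1\leq\|\omega_0^\epsilon\circ X^\epsilon(-t,\cdot)-\omega_0\circ X^\epsilon(-t,\cdot)\|_1+\|\omega_0\circ X^\epsilon(-t,\cdot)-\omega_0\circ X(-t,\cdot)\|_1.$$
For the first term, change variables with $y=X^\epsilon(-t,z)$, i.e. $z=X^\epsilon(t,y)$, so it equals $\int|\omega_0^\epsilon(y)-\omega_0(y)|\,J^\epsilon(t,y)\,dA(y)\leq\mathcal{K}_t\,\|\omega_0^\epsilon-\omega_0\|_1$ using Lemma \ref{principal}(b) (the Jacobian of the principal map is controlled on any set containing the supports), which tends to $0$ by the choice of mollification. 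For the second term, approximate $\omega_0$ in $L^1$ by a continuous compactly supported $g$; for $g$, the integrand $g\circ X^\epsilon(-t,\cdot)-g\circ X(-t,\cdot)$ is supported in the fixed compact set $B$ and tends to $0$ uniformly on $B$ because $X^\epsilon(-t,\cdot)\to X(-t,\cdot)$ uniformly on $B$ and $g$ is uniformly continuous, so that piece is $o(1)$; the error from replacing $\omega_0$ by $g$ is again controlled by $\mathcal{K}_t\|\omega_0-g\|_1$ after the same change of variables, which is arbitrarily small. This proves (a) along the subsequence; since every subsequence has a further sub-subsequence converging to the same limit $\omega(t,\cdot)$ (which does not depend on the extraction, being defined through the flow $X$), the full sequence converges.

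\textbf{Step 3: convergence of $\v^\epsilon$ in $L^\infty$.} This is immediate from (a) and Lemma \ref{LinftyK}. Since $\omega^\epsilon(t,\cdot)-\omega(t,\cdot)$ is supported in a fixed compact set (namely $B$, as both $X^\epsilon(t,\supp\omega_0^\epsilon)$ and $X(t,\supp\omega_0)$ lie in $B$) and is bounded by $2\|\omega_0\|_\infty$ in $L^\infty$, part (a) of Lemma \ref{LinftyK} gives
$$\|\v^\epsilon(t,\cdot)-\v(t,\cdot)\|_\infty=\|K\ast(\omega^\epsilon(t,\cdot)-\omega(t,\cdot))\|_\infty\leq C\,\|\omega^\epsilon(t,\cdot)-\omega(t,\cdot)\|_1^{1/2}\,\|\omega^\epsilon(t,\cdot)-\omega(t,\cdot)\|_\infty^{1/2}\to 0,$$
proving (b).

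\textbf{Main obstacle.} The delicate point is Step 2: one needs the change of variables $z\mapsto X^\epsilon(t,y)$ to be legitimate with a Jacobian bound that is uniform in $\epsilon$, which is exactly where the principality of the flow and its conformality outside $\supp\omega_0^\epsilon$ (Lemmas \ref{support} and \ref{principal}) are used — a generic quasiconformal map would only give an area bound degrading like a power less than $1$. The second subtlety is that $\omega(t,\cdot)$ is defined via the (a priori subsequential) limit $X$, so one must argue that the limit is independent of the chosen subsequence before claiming convergence of the whole family; this follows because $X(t,\cdot)$ is characterized as the unique flow of the limiting velocity field, or more simply because any two subsequential limits satisfy the same representation formula and hence coincide. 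Everything else is routine once the uniform confinement of supports is in hand.
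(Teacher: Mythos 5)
Your overall architecture (split into a mollification error plus a flow-convergence error, change variables to pull the Jacobian onto the data, confine everything to a fixed compact set, then deduce (b) from (a) via Lemma \ref{LinftyK}) is the same as the paper's. But there is a genuine error in Step 2: the inequality
$$\int|\omega_0^\epsilon(y)-\omega_0(y)|\,J^\epsilon(t,y)\,dA(y)\leq \mathcal{K}_t\,\|\omega_0^\epsilon-\omega_0\|_1$$
does not follow from Lemma \ref{principal}(b), and is in fact false in general. Lemma \ref{principal}(b) gives $\int_E J^\epsilon(t,\cdot)\leq\mathcal{K}_t|E|$ only for sets $E\supset\supp\omega_0^\epsilon$; writing your integral by the layer-cake formula, $\int f\,J^\epsilon=\int_0^\infty|X^\epsilon(t,\{f>\lambda\})|\,d\lambda$, the level sets $\{|\omega_0^\epsilon-\omega_0|>\lambda\}$ are in general \emph{proper subsets} of the support, where only the degraded bound of Lemma \ref{qcflows}(c), $|X^\epsilon(t,E)|\lesssim|E|^{1/\mathcal{K}_t}$, is available. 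Your proposed inequality would amount to $J^\epsilon(t,\cdot)\leq\mathcal{K}_t$ pointwise a.e., which quasiconformal maps do not satisfy: taking $f=\chi_E$ with $E$ a small disk inside $\supp\omega_0^\epsilon$, the image area can be of order $|E|^{1/\mathcal{K}_t}$, which dwarfs $\mathcal{K}_t|E|$ as $|E|\to0$. You are aware of exactly this phenomenon in your ``main obstacle'' paragraph, but the weighted integral you need is not of the form the principal-map area estimate covers. The same objection applies to your bound $\mathcal{K}_t\|\omega_0-g\|_1$ for the error in approximating $\omega_0$ by $g$.

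The repair is what the paper actually does: combine the area estimate with the \emph{higher integrability (reverse H\"older) property} of quasiconformal Jacobians. For $1<p<\mathcal{K}_t/(\mathcal{K}_t-1)$ one has
$$\|J^\epsilon(t,\cdot)\|_{L^p(D)}\leq C(p,\mathcal{K}_t)\,|D|^{\frac1p-1}\int_D J^\epsilon(t,\cdot)\leq C(p,\mathcal{K}_t)\,|\supp\omega_0|^{\frac1p},$$
where the last integral is controlled by Lemma \ref{principal}(b) since $D\supset\supp\omega_0^\epsilon$. Then H\"older plus the interpolation $\|h\|_{L^{p'}(D)}\leq\|h\|_{L^1(D)}^{1/p'}\|h\|_{L^\infty(D)}^{1/p}$ gives
$$\int_D|\omega_0^\epsilon-\omega_0|\,J^\epsilon(t,\cdot)\leq\|\omega_0^\epsilon-\omega_0\|_{L^1(D)}^{\frac1{p'}}\,\|\omega_0^\epsilon-\omega_0\|_{L^\infty(D)}^{\frac1p}\,\|J^\epsilon(t,\cdot)\|_{L^p(D)},$$
which still tends to $0$ because the $L^1$ error of the mollification vanishes (only a fractional power of it, but that suffices). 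With this substitution your Steps 1 and 3 and the subsequence discussion are fine and match the paper's argument.
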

\begin{proof}
One has
$$
\|\omega^\epsilon(t,\cdot)-\omega(t,\cdot)\|_1\leq
\|\omega_0^\epsilon(X^\epsilon(-t,\cdot))-\omega_0(X^\epsilon(-t,\cdot)\|_1+\|\omega_0 (X^\epsilon(-t,\cdot)-\omega_0(X (-t,\cdot)\|_1
$$
At the first term, we consider a disk $D$ such that $\supp\omega^\epsilon_0,\supp\omega_0\subset D$, and use the higher integrability of quasiconformal jacobians. If $1<p<\frac{\mathcal{K}_t}{\mathcal{K}_t-1}$, 
$$\aligned
\|\omega_0^\epsilon(X^\epsilon(-t,\cdot))-\omega_0(X^\epsilon(-t,\cdot)\|_1
&=\int|\omega_0^\epsilon-\omega_0|\,J^\epsilon(t,\cdot)\\
&\leq\|\omega_0^\epsilon-\omega_0\|_{L^{p'}(D)}\,\|J^\epsilon(t,\cdot)\|_{L^{p}(D)}\\
&\leq\|\omega_0^\epsilon-\omega_0\|_{L^{1}(D)}^\frac1{p'}\,\|\omega_0^\epsilon-\omega_0\|_{L^{\infty}(D)}^\frac1p\,\|J^\epsilon(t,\cdot)\|_{L^{p}(D)}
\endaligned$$
Above, $\|\omega_0^\epsilon-\omega_0\|_{L^{1}(D)}$ converges to $0$, while $\|J^\epsilon(t,\cdot)\|_{L^{p}(D)}$ is bounded in terms of $\mathcal{K}_t$ and $|D|$, 
$$
\aligned
\|J^\epsilon(t,\cdot)\|_{L^{p}(D)}
&=\left(\int_D J^\epsilon(t,\cdot)^{p}\right)^\frac1{p}\\
&\leq C(p, \mathcal{K}_t)\, |D|^{\frac1{p}-1}\int_D J^\epsilon(t,\cdot) \\
&\leq C(p, \mathcal{K}_t)\, |\supp\omega_0^\epsilon|^{\frac1{p} }  \leq C(p, \mathcal{K}_t)\, |\supp\omega_0 |^{\frac1{p} } 
\endaligned
$$
by Lemma \ref{principal} (b) and the reverse H\"older property of quasiconformal jacobians \cite{AIS}, and provided that $\epsilon>0$ is small enough. Concerning the second term, let us choose $\omega_0^n\in C_0$ such that $\|\omega_0^n-\omega_0\|_1\leq 1/n$ and $\|\omega_0^n\|_\infty\leq\|\omega_0\|_\infty$. Then
$$
\aligned
\|\omega_0 (X^\epsilon(-t,\cdot))-\omega_0 (X (-t,\cdot))\|_1
&\leq  \|\omega_0 (X^\epsilon(-t,\cdot))-\omega_0^n(X^\epsilon(-t,\cdot))\|_1\\
&+\|\omega_0^n(X^\epsilon(-t,\cdot))-\omega_0^n(X (-t,\cdot))\|_1\\
&+\|\omega_0^n(X (-t,\cdot))-\omega_0 (X (-t,\cdot))\|_1
\endaligned
$$
Above, again because of the higher integrability of quasiconformal jacobians,
$$\aligned
\|\omega_0(X^\epsilon(-t,\cdot))-\omega_0^n(X^\epsilon(-t,\cdot))\|_1
&=\int  |\omega_0 -\omega_0^n |\,J^\epsilon(t,\cdot)\\
&=\|\omega_0 -\omega_0^n \|_{L^{p'}(D)}\,\|J^\epsilon(t,\cdot)\|_{L^p(D)}\\
&\leq\|\omega_0 -\omega_0^n \|_{L^{1}(D)}^\frac1{p'}\,\|\omega_0^n-\omega_0\|_\infty^\frac1p\,\|J^\epsilon(t,\cdot)\|_{L^p(D)}\\
&\leq n^\frac{-1}{p'}\,2^{\frac{1}{p}}\|\omega_0\|_\infty^\frac1p\,\|J^\epsilon(t,\cdot)\|_{L^p(D)}
\endaligned$$
and similarly for $\|\omega_0^n(X (-t,\cdot))-\omega_0(X (-t,\cdot))\|_1$. Thus each of these two terms can be made smaller than $\delta/3$ if $n$ is chosen large enough. The control of the second term comes by continuity. Precisely, as $X^\epsilon(-t,\cdot)\to X(-t,\cdot)$ and $\omega_0^n$ is continuous, there is $\epsilon>0$ such that $\|\omega_0^n(X^\epsilon(-t,\cdot)-\omega_0^n(X(-t,\cdot))\|_\infty<\delta/3$. Thus (a) follows.  For the proof of (b), use (a) and Lemma \ref{LinftyK} (b). 
\end{proof}

\noindent
The above convergence result suffices to prove that $\omega$ is a weak solution to the desired nonlinear transport equation. Existence is proved.\\
\\
As we said in the introduction, Reimann's Theorem \ref{Reimannth} extends (as proven in \cite[Theorem 1]{CJ}) to vector fields $\v$ such that $$\overline\partial \v + \lambda\,\Im(\partial \v)\in L^\infty,$$
that is, the flow $X(t,\cdot)$ of these vector fields consists of quasiconformal mappings. Above, one may choose $\lambda\in\C$ to be a constant with $|\lambda|<1$, or also a smooth, compactly supported function $\lambda\in C^\infty_c(\C)$ with $\|\lambda\|_{L^\infty(\R^2)}<1$. This makes it reasonable to look for extensions of Theorem \ref{main} to other kernels $K(z)$ different than the one we used here $K(z)=\frac{e^{i\theta}}{2\pi z}$. The new kernels $K$ we have in mind are complex  multiples of the fundamental solution of the operator $\overline\partial \v + \lambda\,\Im(\partial \v)$.

\section{The velocity formulation}\label{governing}

Let us recall that the Euler's system of equations is given, in its original formulation, in terms of the velocity field $\v$. Namely, one has the following equivalence
$$
\begin{cases}
\omega_t+\v\cdot\nabla\omega=0,\\\v=\frac{i}{2\pi\bar{z}}\ast\omega,\\\omega|_{t=0}=\omega_0
\end{cases}
\hspace{1cm}
\Longleftrightarrow
\hspace{1cm}
\begin{cases}
\v_{t}+\v\cdot\nabla\v=-\nabla p,\\\div\v=0,\\\curl\v|_{t=0}=\frac{1}{2 } \omega_0.
\end{cases}
$$
where $p$ is the scalar valued \emph{pressure} function. It turns out that a similar equivalent formulation can be provided for \eqref{cauchysystem}, and this is our goal in the present section.  From now on, we denote
$$\bC=\left(\begin{array}{cc}1&0\\0&-1\end{array}\right)$$
and set $\bM_\theta z = e^{i\theta}\,\bC\,z=e^{i\theta}\,\bar{z}.$ Thus, indeed $\bM_\theta$ is the $\R$-linear map with matrix 
$$\bM_\theta=\left(\begin{array}{cc}\cos\theta&\sin\theta\\\sin\theta&-\cos\theta\end{array}\right)$$
To avoid formalities, we reduce ourselves to the smooth setting, and assume the datum $\omega_0:\C\to\R$ is smooth and compactly supported. Let us remind that $K(z)=K_\theta(z)=\frac{e^{i\theta}}{2\pi z}$.

\begin{prop}\label{equivform}
The scalar-valued function $\omega:[0,T]\times\C\to\R$ is a weak solution of
\begin{equation}\label{omegaequation}
\begin{cases} \omega_t+\bv\cdot\nabla\omega=0\\\bv=K\ast\omega \\\omega|_{t=0}=\omega_0\end{cases}
\end{equation}
if and only if $\bv:[0,T]\times\C\to\C$ and $q:[0,T]\times\C\to\R$ solve
\begin{equation}\label{vequation}
\begin{cases}\bv_t+\bv\cdot\nabla\bv = -\bM_\theta\nabla q\\
-\Delta q = \div(\bv)\,\div(\bM_\theta\bv)\\
\curl(\bM_\theta\bv)|_{t=0}=0\\
\div(\bM_\theta\bv)|_{t=0}=\omega_0
\end{cases}
\end{equation}
also in the weak sense.
\end{prop}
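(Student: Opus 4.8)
The plan is to establish the equivalence by passing between the transport formulation \eqref{omegaequation} and the velocity formulation \eqref{vequation} through the relation $\overline\partial\v = \frac{e^{i\theta}\omega}{2}$, which in real-variable terms reads $\div(\bM_\theta\bv)=\omega$ and $\curl(\bM_\theta\bv)=0$. First I would record the elementary linear-algebra identities for $\bM_\theta$: since $\bM_\theta z = e^{i\theta}\bar z$ is an $\R$-linear anticonformal involution, one has $\bM_\theta^2=\Id$, $\bM_\theta$ is symmetric, and for any vector field $\mathbf w$ the operators $\div$ and $\curl$ interact with $\bM_\theta$ by $\div(\bM_\theta\mathbf w) = $ (a reflection of the first-order part of $\mathbf w$) and similarly for $\curl$; concretely $\overline\partial(\bM_\theta\mathbf w)$ and $\partial(\bM_\theta\mathbf w)$ swap roles with $\partial\mathbf w,\overline\partial\mathbf w$ up to the factor $e^{i\theta}$. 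The cleanest route is to phrase everything through complex derivatives: writing $\v$ as a complex function, $\div\v = 2\Re(\overline\partial\v)$ and $\curl\v = -2\Im(\overline\partial\v)$ reversed appropriately, so the Biot--Savart law $\v=K\ast\omega$ is exactly equivalent to the pair $\{\overline\partial\v = \tfrac{e^{i\theta}}{2}\omega,\ \v \text{ vanishes at }\infty\}$, which in turn is exactly $\{\div(\bM_\theta\v)=\omega,\ \curl(\bM_\theta\v)=0\}$ together with decay. This disposes of the two initial-condition lines and shows that the constraint $\curl(\bM_\theta\bv)=0$ propagates in time.

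Next I would derive the pressure equation. Starting from the momentum equation $\bv_t+\bv\cdot\nabla\bv = -\bM_\theta\nabla q$, apply $\div(\bM_\theta\,\cdot\,)$ to both sides. On the right one gets $-\div(\bM_\theta\bM_\theta\nabla q) = -\div(\nabla q) = -\Delta q$ since $\bM_\theta^2=\Id$. On the left, $\div(\bM_\theta\bv_t) = \partial_t\,\div(\bM_\theta\bv) = \partial_t\omega$, using the order-of-differentiation swap and the identification above. For the nonlinear term one computes $\div(\bM_\theta(\bv\cdot\nabla\bv))$; expanding in coordinates and using $\curl(\bM_\theta\bv)=0$ and $\div(\bM_\theta\bv)=\omega$, this should collapse to $\bv\cdot\nabla\omega + \div(\bv)\,\div(\bM_\theta\bv)$ — the first piece because $\bM_\theta$ commutes appropriately with the transport operator on the scalar $\omega$, the second being the genuinely nonlinear remainder (a Jacobian-type term). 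Matching, $\partial_t\omega + \bv\cdot\nabla\omega + \div(\bv)\,\omega = -\Delta q$; imposing $-\Delta q = \div(\bv)\,\div(\bM_\theta\bv) = \div(\bv)\,\omega$ then yields precisely $\omega_t+\bv\cdot\nabla\omega=0$, and conversely. The identity $\div(\bv)\div(\bM_\theta\bv)$ equals the Jacobian determinant of $\bv$ up to sign (a standard fact: for $\bM_\theta\mathbf w$ curl-free, $\div\mathbf w\,\div(\bM_\theta\mathbf w) = 2\det D\v$-type expression), which is the clean way to see the remainder term.

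For the reverse direction, given $\bv,q$ solving \eqref{vequation}, I would set $\omega:=\div(\bM_\theta\bv)$; the initial conditions plus the evolution of $\curl(\bM_\theta\bv)$ (which stays zero because taking $\curl(\bM_\theta\,\cdot\,)$ of the momentum equation kills the gradient term and leaves a transport-type equation for $\curl(\bM_\theta\bv)$ with zero initial data) recover $\v=K\ast\omega$, and then the computation above run backwards gives the transport equation. I would be careful to state everything in the weak/distributional sense as the proposition requires, interpreting products like $\div(\bv)\,\div(\bM_\theta\bv)$ in the smooth setting to which the section has already restricted.

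The main obstacle I anticipate is the bookkeeping in the nonlinear term: verifying that $\div\big(\bM_\theta(\bv\cdot\nabla\bv)\big) = \bv\cdot\nabla\,\div(\bM_\theta\bv) + \div(\bv)\,\div(\bM_\theta\bv)$ under the constraint $\curl(\bM_\theta\bv)=0$ requires a careful coordinate expansion (or a slick complex-derivative manipulation using $\overline\partial(\bM_\theta\bv)$ real-valued), and getting the signs right for the reflection $\bM_\theta$ throughout. A secondary subtlety is confirming that the constraint $\curl(\bM_\theta\bv)=0$ is genuinely propagated by \eqref{vequation} rather than needing to be imposed at all times — this is what makes the two initial-condition lines sufficient — and this again rests on applying $\curl(\bM_\theta\,\cdot\,)$ to the momentum equation and checking the resulting transport equation has the uniqueness needed to conclude $\curl(\bM_\theta\bv)\equiv 0$.
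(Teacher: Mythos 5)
Your plan is, up to notation, the same proof as the paper's: the operators $\div(\bM_\theta\,\cdot\,)$ and $\curl(\bM_\theta\,\cdot\,)$ that you apply to the momentum equation are exactly $2\Re\!\left(e^{-i\theta}\overline\partial(\cdot)\right)$ and $-2\Im\!\left(e^{-i\theta}\overline\partial(\cdot)\right)$, which is precisely what the paper computes after taking $\overline\partial$ of the complexified momentum equation and multiplying by $e^{-i\theta}$. Your key identity
$$\div\left(\bM_\theta(\bv\cdot\nabla\bv)\right)=\bv\cdot\nabla\,\div(\bM_\theta\bv)+\div(\bv)\,\div(\bM_\theta\bv)$$
is correct and is the paper's product rule $\overline\partial(\v\cdot\nabla\v)=\v\cdot\nabla(\overline\partial\v)+\overline\partial\v\,\div\v$ in real-variable dress; note that it holds unconditionally, since $\v\cdot\nabla$ is a real first-order operator and $\div\v$ is a real scalar, so taking real parts commutes with both --- you need neither the constraint $\curl(\bM_\theta\bv)=0$ nor any Jacobian interpretation of $\div(\bv)\,\div(\bM_\theta\bv)$ at this step. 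The propagation of $\curl(\bM_\theta\bv)=0$ via a homogeneous continuity equation with zero initial data is also exactly the paper's argument, and your remark that this is what makes the two initial-condition lines sufficient is the right point to stress.

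The one genuine gap is in the implication \eqref{omegaequation} $\Rightarrow$ \eqref{vequation}: there you are given only $\omega$ and $\bv=K\ast\omega$, and you must \emph{construct} the pressure $q$. Both of your computational paragraphs start from the momentum equation, i.e.\ they presuppose that some $q$ already exists, and the paragraph you label ``the reverse direction'' is again the implication \eqref{vequation} $\Rightarrow$ \eqref{omegaequation}; so the existence of the potential is never addressed. The missing step is to show that $\bM_\theta(\bv_t+\bv\cdot\nabla\bv)$ is curl-free, hence equal to $-\nabla q$ for some scalar $q$. This is within reach of your own machinery: $\curl\left(\bM_\theta(\bv_t+\bv\cdot\nabla\bv)\right)=\partial_t\curl(\bM_\theta\bv)+\bv\cdot\nabla\curl(\bM_\theta\bv)+\div(\bv)\,\curl(\bM_\theta\bv)$, and $\curl(\bM_\theta\bv)\equiv 0$ because $e^{-i\theta}\overline\partial\bv=\omega/2$ is real-valued. (In the paper this is the observation that $\partial\left(e^{i\theta}\,\overline{\v_t+\v\,\partial\v+\overline\v\,\overline\partial\v}\right)=\tfrac{\omega}{2}\,\div\v$ is real, whence the field is conservative.) Once $q$ exists, taking $\div(\bM_\theta\,\cdot\,)$ and using your identity together with the transport equation yields $-\Delta q=\div(\bv)\,\div(\bM_\theta\bv)$, completing that direction. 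A secondary point to make explicit is that recovering $\bv=K\ast\omega$ from the div/curl system requires the decay of $\bv$ at infinity (a Liouville argument), which you mention only in passing.
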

\begin{proof}
We first go from \eqref{vequation}  to \eqref{omegaequation}. We identify $\R^2\equiv \C$, and write the system \eqref{vequation} in complex notation,
$$
\begin{cases}
\v_t+\v\,\partial\v+\overline\v\,\overline\partial\v = -e^{i\theta}\overline{\nabla q}\\-\Delta q = \div(\v)\,\div(e^{i\theta}\bar\v)\\
\Im(\partial(e^{i\theta}\bar\bv))|_{t=0}=0\\
\Re(\partial(e^{i\theta}\bar\v))|_{t=0}=\omega_0\end{cases}
$$
Now, taking $\overline\partial$ on the first equation, and obtain
$$
(\overline\partial\v)_t+\v\,\partial(\overline\partial\v)+\overline\v\,\overline\partial(\overline\partial\v )+\overline\partial \v (\partial\v+\overline\partial\overline\v)= -\overline\partial(e^{i\theta}\overline{\nabla q})
$$
or equivalently,
$$
(\overline\partial\v)_t+\v\cdot\nabla(\overline\partial\v)+ \overline\partial\v \,\div\v= -\frac12\,e^{i\theta}\,\Delta q.
$$
We now multiply by $e^{-i\theta}$, and use the $\C$-linearity of the transport operator $\frac{d}{dt}+\v\cdot\nabla$ to get
$$
(e^{-i\theta}\,\overline\partial\v)_t+\v\cdot\nabla(e^{-i\theta}\,\overline\partial\v)+ e^{-i\theta}\,\overline\partial\v \,\div\v= -\frac12\,\Delta q.
$$
After taking real and imaginary parts, 
\begin{equation}\label{realimparts}
\begin{cases}
\Re ((e^{-i\theta}\overline\partial\v)_t+\v\cdot\nabla(e^{-i\theta}\overline\partial\v))+\Re(e^{-i\theta}\overline\partial\v)\,\div\v = -\frac12\,\Delta q\\
\Im((e^{-i\theta}\overline\partial\v)_t+\v\cdot\nabla(e^{-i\theta}\overline\partial\v))+\Im(e^{-i\theta}\overline\partial\v)\,\div\v= 0.
\end{cases}
\end{equation}
The above equations may be seen as scalar conservation laws for $\Re(e^{-i\theta}\,\overline\partial\v)$ and $\Im(e^{-i\theta}\,\overline\partial\v)$. The second one is homogeneous, and so from the initial condition
$$2\,\Im(e^{-i\theta}\overline\partial\v)|_{t=0}=-\curl(\bM_\theta\v)|_{t=0}=0$$ 
we deduce that at any time $t>0$  
$$2\,\Im(e^{-i\theta}\overline\partial\v)=-\curl(\bM_\theta\v)=0.$$ 
To see this, simply call $\rho=2\,\Im(e^{-i\theta}\overline\partial\v)$ and note it satisfies the following initial value problem, 
$$\begin{cases}
\frac{d}{dt}\rho+\div(\rho\,\v)=0\\\rho(0,\cdot)=0
\end{cases}$$
which has $\rho=0$ as its unique solution, due to the smoothness of $\v$. As a consequence, $e^{-i\theta}\overline\partial\v\in\R$ and so if we now denote $\omega=2\Re(e^{-i\theta}\overline\partial\v)$, then 
$$\omega = \div(e^{i\theta}\bar\v).$$ 
Thus the first equation at \eqref{realimparts} implies that 
$$\omega_t+\v\cdot\nabla\omega+\omega\,\div\v= -\Delta q.$$
Now, since the second equation at \eqref{vequation} tells us that $\omega\,\div\v= -\Delta q$, we necessarily have for $\omega$ a homogeneous transport equation
$$\omega_t+\v\cdot\nabla\omega=0$$
together with the initial condition $\omega|_{t=0}=\div(e^{i\theta}\bar\v)|_{t=0}=\omega_0$ as claimed. \\
\\
For the converse implication, we start by noting that our choice of the kernel $K$ and the second equation in \eqref{omegaequation} tell  us that $2e^{-i\theta}\overline\partial\bv=\omega$, which by assumption is real valued. We now use the first equation in \eqref{omegaequation}, together with the $\C$-linearity of the complex operator, to get
$$
\overline\partial\v_t + \v\cdot\partial(\overline\partial \v)+\overline{\v}\cdot\overline\partial(\overline\partial\v)=0
$$
or equivalently
$$
\overline\partial(\v_t +  \v \cdot\partial \v+\overline{\v}\cdot\overline\partial\v)=  \overline\partial\v \,\div\v
$$
We now complex conjugate at both sides of the equality, multiply by $e^{i\theta}$, and use $\C$-linearity of the transport operator, and obtain
\begin{equation}\label{aqui}
\partial(e^{i\theta}(\overline{\v_t +  \v \cdot\partial \v+\overline{\v}\cdot\overline\partial\v}))=\frac\omega2\,\div\v
\end{equation}
By assumption, the right hand side above is real, whence $e^{i\theta}(\overline{\v_t +  \v \cdot\partial \v+\overline{\v}\cdot\overline\partial\v})$ is a conservative vector field. Thus there exists a scalar valued potential $q$ such that
$$
e^{i\theta}(\overline{\v_t +  \v \cdot\partial \v+\overline{\v}\cdot\overline\partial\v})=-\nabla q
$$ 
This automatically gives the first equation at \eqref{vequation}. Moreover, if we take real parts at \eqref{aqui},
$$-\frac12\Delta q=\frac12\div(e^{i\theta}(\overline{\v_t +  \v \cdot\partial \v+\overline{\v}\cdot\overline\partial\v}))=\frac\omega2\,\div\v$$
or equivalently
$$-\Delta q= \div(\v)\,\div(\bM_\theta \v)$$
as claimed. The third and fourth equations in \eqref{vequation} are automatic from the second and fourth in \eqref{omegaequation}.
\end{proof}

\noindent
One of the initial motivations to find the above equivalent formulation was to look for energy estimates that help in proving that the problem \eqref{cauchysystem} has indeed a unique solution, exactly in the same way uniqueness for Euler's is proven in \cite[pp. 320-321]{BM}. Unfortunately, and in contrast to the case of Euler's incompressible system, the velocity formulation does not seem to help in proving uniqueness. Indeed, if $\v^1,\v^2$ are two solutions to \eqref{cauchysystem} with the same datum $\omega_0$, then the difference $\v=\v^1-\v^2$ may fail to belong to $L^2$, but it certainly belongs to $L^p$ for any $p>2$. Thus the energy $E(t)=E_p(t)=\frac1p\,\|\v\|_p^p$ is well defined. Moreover, the velocity formulation \eqref{vequation} provides us with the estimate
$$E'(t)\leq C_1\,E(t)+C_2\,q\,E(t)^{1-\frac1q}+C_3\,E(t)^{1-\frac1p}$$
for all large values of $q$, and where the constants $C_1, C_2, C_3$ are independent of $q$ and $t$. In the very special Euler's setting, the value $p=2$ is allowed, and the velocity equation provides for $E_2(t)$ a similar inequality with $C_1=C_3=0$, which immediately implies uniqueness (as it forces $E(t)=0$ for $t>0$). In our setting, though, the presence of the $C_3$ term explicitly breaks the argument. Thus new ideas seem to be needed for proving uniqueness for bounded solutions of \eqref{cauchysystem}.

\vspace{2cm}
 \noindent
 Albert Clop\\
 Department of Mathematics and Computer Science\\
 Universitat de Barcelona\\
 08007-Barcelona\\
 CATALONIA\\
 albert.clop@ub.edu\\
 \\
 \\
 Banhirup Sengupta\\
 Departament de Matem\`atiques\\
 Universitat Aut\`onoma de Barcelona\\
 08193-Bellaterra\\
 CATALONIA\\
 sengupta@mat.uab.cat\\

\end{document}